\documentclass[12pt]{article}
\usepackage{amsmath,amsxtra,amssymb,latexsym, amscd,amsthm}
\usepackage{xcolor}
\usepackage{graphicx}
\usepackage{float}
\usepackage{epstopdf}
\usepackage{multirow}
\usepackage{subfigure}





\newtheorem{lemma}{Lemma}
\newtheorem{theorem}{Theorem}

\newtheorem{definition}{Definition}

\theoremstyle{remark}
\newtheorem{remark}{Remark}

\oddsidemargin= 0.5truecm\relax \topmargin=-1.5truecm\relax
\textheight=23truecm\relax \textwidth=15.5truecm\relax
\footskip=1.2truecm\relax
\parskip 0.25cm
\parindent 0pt

\allowdisplaybreaks

\numberwithin{equation}{section}

\begin{document}
{\bf  REGULARIZATION OF  BACKWARD TIME-FRACTIONAL PARABOLIC EQUATIONS
BY SOBOLEV-TYPE EQUATIONS}

{\bf Dinh Nho H\`ao$^1$, Nguyen Van Duc$^2$, Nguyen Van Thang$^2$ and Nguyen Trung Th\`anh$^3$}

{\it$^{1}$ Hanoi Institute of Mathematics, VAST, 18 Hoang Quoc Viet, Hanoi, Vietnam\\Email: hao@math.ac.vn \\
$^{2}$ Department of Mathematics, Vinh University, Vinh City,
Vietnam,\\ Email: nguyenvanducdhv@gmail.com,
nguyenvanthangk17@gmail.com
\\
$^{3}$Department of Mathematics, Rowan University, Glassboro, NJ, USA.\\ Email: nguyent@rowan.edu}

{\small {\bf Abstract:} The problem of determining the initial condition from noisy final observations in time-fractional parabolic equations is considered. This problem is well-known to be ill-posed and it is regularized by backward Sobolev-type equations. Error estimates of H\"{o}lder type are obtained with \textit{a priori}  and \textit{a posteriori} regularization parameter choice rules. The proposed regularization method results in a stable noniterative numerical scheme. The theoretical error estimates are confirmed by numerical tests for one- and two-dimensional equations.}

{\bf Keywords:} {\small Backward time-fractional parabolic
equations, Sobolev-type equations, numerical implementation.}

\section{Introduction}\label{sec:int}

We consider the backward time-fractional parabolic equation:
\begin{equation}\label{main-problem}
\begin{cases}
\dfrac{\partial^\gamma u}{\partial t^\gamma} + Au = 0,\quad 0<t<T,\\
\|u(T)-f\|\le\varepsilon,
\end{cases}
\end{equation}
where $\gamma\in (0,1)$,
\begin{equation}
\dfrac{\partial^\gamma u}{\partial t^\gamma} := \dfrac{1}{\Gamma
(1-\gamma)}\int_0^t(t-s)^{-\gamma}\dfrac{\partial u(\cdot, s)}{\partial
s}ds,
\end{equation}
 is the Caputo derivative \cite{Kilbas,P},  $\Gamma(\cdot)$ is
Euler's Gamma function, and $A: D(A)\subset \mathbb{H}\to
\mathbb{H}$ is a self-adjoint closed operator on a Hilbert space
$\mathbb{H}$. We assume that $-A$ generates a compact contraction
semi-group $\{S(t)\}_{t\geq 0}$ on $\mathbb{H}$ and $A$ admits an
orthonormal eigenbasis $\{\phi_i\}_{i\geq 1}$ in $\mathbb{H}$
associated with the eigenvalues $\{\lambda_i\}_{i\geq 1}$ such that
$$
0<\lambda_1<\lambda_2< \ldots,\ \ \mbox{and}\ \
\lim\limits_{i\to+\infty}\lambda_i=+\infty.
$$
In this work we denote the inner product and norm in $\mathbb{H}$ by
$\left<\cdot,\cdot\right>$ and $\|\cdot\|$, respectively.

As an example of the operator $A$, consider the case
$\mathbb{H}=L^2(\Omega)$, where $\Omega$ is a bounded domain in
$\mathbb{R}^d$, $d \ge 1$, with sufficiently smooth boundary
$\partial\Omega$. As usual, we denote by $H^p(\Omega)$ the Sobolev
spaces.  Denote by $H^1_0(\Omega) = \{u\in  H^1(\Omega):
u|_{\partial\Omega} = 0\}.$ Then $A$ can be chosen as:
\begin{align*}
(Au)(x) :=-\sum\limits_{i=1}^d\frac{\partial}{\partial x_i}
\left(\sum\limits_{j=1}^d a_{ij}(x)\frac{\partial}{\partial
x_j}u(x)\right)+c(x)u(x), x\in \Omega,\label{dn1}
\end{align*}
with $D(A) = H^2(\Omega)\cap H^1_0(\Omega).$ Here, we assume that $a_{ij} = a_{ji} \in C^1(\overline{\Omega}), i,j = 1, \ldots, d;\ c \in C(\overline{\Omega}),\ c(x) \geq 0,\ \forall x \in \overline{\Omega}$; and $\sum_{i,j=1}^d a_{ij} (x) \xi_i\xi_j \geq \nu \sum_{i=1}^d\xi_i^2,\  \forall x \in \overline{\Omega}$ for some $\nu > 0$.

The backward problem (\ref{main-problem}) arises from several practical contexts, for example, in the determination of contaminant sources in underground fluid flow \cite{jin}. It  is well-posed for $t
> 0$ and ill-posed for $t=0$, see \cite{hdt}.  Since the first work
\cite{Liu-Yamamoto-AA} devoted to the backward time-fractional
diffusion equation, several papers on backward time-fractional
parabolic equations have been published. For the mollification
method, see \cite{Liyan Wang-Jijun Liu1,yang-liu2015}; for the
non-local boundary value problem method, see
\cite{hdt,jwang,twei,Ming Yang- Jijun Liu-ANM}; and for Tikhonov
regularization, we refer the reader to \cite{mh,Liyan Wang-Jijun
Liu,jwang2,jwang1}. In this paper we will regularize (\ref{main-problem}) by the backward Sobolev-type equations. {Namely, we approximate the solution $u$
of (\ref{main-problem}) by the solution $v_\alpha$ of the following problem for the Sobolev-type equation:
\begin{equation}\label{zz1ax}
\begin{cases}
\dfrac{\partial^\gamma  v_\alpha}{\partial t^\gamma}  +\alpha A^b\dfrac{\partial^\gamma v_\alpha}{\partial t^\gamma} + A v_\alpha=0,~0<t<T,\\
v_\alpha(T)= f,
\end{cases}
\end{equation}
where $b $ is a real number such that $b\ge 1$, $\alpha>0$ is a regularization parameter, and operator $A^b$ is defined by
	\begin{equation}\label{Ab}
A^b v :=\sum\limits_{n=1}^\infty\lambda_n^b\left<v,\phi_n\right>\phi_n,\quad \forall v\in D(A^b).
\end{equation}
For this operator $A^b$, $I + \alpha  A^b$ is invertible.
Assuming that the fractional differentiation and $A^b$ are interchangeable, we can convert  \eqref{zz1ax} into the following problem
\begin{equation}\label{zz1}
\begin{cases}
\dfrac{\partial^\gamma v_\alpha}{\partial t^\gamma}+A_\alpha v_\alpha=0,~0<t<T,\\
v_\alpha(T)=f,
\end{cases}
\end{equation}
where $A_\alpha =  (I + \alpha A^b)^{-1} A$.
}

Here, for $p > 0$, we define
$$
D(A^p) :=\left\{\psi\in
\mathbb{H}:\sum\limits_{n=1}^\infty\lambda_n^{2p}\left<\psi,
\phi_n\right>^2<\infty\right\}
$$
and the norm
$$
\|\psi\|_p:= \left(\sum\limits_{n=1}^\infty\lambda_n^{2p}\left<\psi,
\phi_n\right>^2\right)^{\frac{1}{2}},~\psi\in D(A^p).
$$

For operator $A$ defined above, it has been shown that $D(A^p)\subset
H^{2p}(\Omega),\ p>0$, and $D(A^{\frac{1}{2}})=H_0^1(\Omega)$, see, e.g.,  \cite{Sakamoto-Yamamoto-Jmaa}.

The regularization method by Sobolev equations was first introduced by Gajewski and
Zacharias for  parabolic equations backward in time \cite{gaj} (i.e., for the case $\gamma = 1$). Since then, the method has been investigated in a number of  works, see, e.g.,
\cite{ewing,fury,huang,long94,long96,padon90,padon,renardy,sho}. However, there has been no error estimate in 
\cite{gaj},  \cite{huang}, 
\cite{sho}, and in the other
related works, error estimates have been obtained only for \textit{a
priori} regularization parameter choice rules. 
\textit{A posteriori} parameter choice rules have not been investigated yet.

To the best of our
knowledge, the present work is the first one to use the Sobolev-type equations for regularizing
the  backward time-fractional parabolic equations
(\ref{main-problem}) with $\gamma\in (0,1)$. {The results of this paper include the following. First, we prove that problem \eqref{zz1} is well-posed (see Lemma \ref{lemmasolution}). Second, we prove
error estimates of H\"{o}lder type for both \textit{a priori} and
\textit{a posteriori} rules for choosing the regularization parameter $\alpha$. The convergence of the regularized solution $v_\alpha$ to the exact solution $u$ is of optimal order}. Indeed, for our \textit{a priori} parameter choice rule, we obtain a convergence rate of order $\frac{p}{p+1}$ for $0 < p < b$ and of order $\frac{b}{b+1}$ for $p\ge b$ (see Theorem \ref{Theorem2}). For our \textit{a posteriori} parameter choice rule, we obtain a convergence rate of order $\frac{p}{p+1}$ for $0 < p < b-1$ and of order $\frac{b}{b+1}$ for $p\ge b -1$ (see Theorem \ref{Theorem3}). This order of convergence was proved to be optimal in H\`ao et~al.~\cite{hdt}, in which the authors also obtained error estimates of the same order as in this paper using the method of non-local boundary value problem.  Third, we propose a numerical method for solving problem \eqref{zz1} using the method of separation of variables and demonstrate the performance of the proposed regularization method using numerical tests in one and two dimensions. We should mention that no other methods for solving backward fractional-order Sobolev equations have been reported. For numerical methods for forward fractional-order Sobolev-type equations, see, e.g., \cite{Chadha:2018}.

To obtain high convergence rates, we choose $A_\alpha$ in a different way compared to that of Gajewski and Zacharias \cite{gaj} and the above papers. More precisely,  we choose
$A_\alpha :=(I+\alpha A^b)^{-1} A$, where $b\geq 1$ is an arbitrary real number, whereas Gajewski and Zacharias \cite{gaj} and the related authors considered $A_\alpha :=(I+\alpha A)^{-1} A$. The error estimates presented in Section \ref{regularization} show that the convergence rate is higher when $b$ is larger.

Compared to other methods for solving problem (\ref{main-problem}), the order of our error estimates is higher than those of \cite{Liyan Wang-Jijun Liu1, jwang2,jwang1,jwang, twei, Ming Yang- Jijun Liu-ANM}, where the authors have used another regularization methods. Indeed, as pointed out in Remarks \ref{remark2} and \ref{remark4}, the order of our error estimates  is larger than $2/3$ for appropriate values of $p$ and $b$, whereas the order of the error estimate
in \cite{jwang, Ming Yang- Jijun Liu-ANM} is not greater than
${1}/{2}$ while that in
\cite{jwang2,jwang1,twei} is not greater than ${2}/{3}$ for all
$p>0$ for their \textit{a priori} parameter choice rule and is not greater than
${1}/{2}$ for their \textit{a posteriori} parameter choice rule.


The paper is organized as follow: in Section \ref{sec:aux} we recall
some basic definitions and present simple inequalities which are
needed for proving the main results in this paper. In Section
\ref{regularization} we describe our regularization method with the
error estimates, the proofs of which will be given in Section
\ref{proofs}. Numerical implementation of
the proposed regularization scheme and numerical tests are presented in Section \ref{sec:num}. Finally, some conclusions are given in Section \ref{sec:conclusion}.

\section{Auxiliary results} \label{sec:aux}
Denote by $E_{\gamma, \beta}(z)$ the Mittag-Leffler function
\cite{Kilbas, P}:
\begin{align}\label{Mittag-Leffler function}
E_{\gamma, \beta}(z)=\sum\limits_{k=0}^\infty\dfrac{z^k}{\Gamma(k\gamma+\beta)}, z \in \mathbb{C}, \gamma>0, ~\beta\in\mathbb{R}.
\end{align}

\begin{definition}{\rm\cite{hdt}} For $u_0\in \mathbb{H}$, a function $u(t): [0,T] \to \mathbb{H}$ is called a solution to problem
\begin{equation}\label{main-problem000}
\begin{cases}
\dfrac{\partial^\gamma u}{\partial t^\gamma}+Au=0,\quad 0<t<T,\\
u(0)=u_0,
\end{cases}
\end{equation}
 if $u(t)\in C^1((0,T),\mathbb{H})\cap C([0,T],\mathbb{H})$,
$u(t)\in D(A)$ for all $t\in(0,T)$, and \eqref{main-problem000}
holds.
\end{definition}

\begin{theorem}\label{0Theorem1}{\rm\cite{hdt}}
Problem \eqref{main-problem000} admits a unique solution, which can
be represented in the form:
\begin{align}\label{congthucnghiem}
u(t)=\sum\limits_{n=1}^\infty
E_{\gamma,
1}\left(-\lambda_nt^\gamma\right)\left<u_0,\phi_n\right>\phi_n.
\end{align}
\end{theorem}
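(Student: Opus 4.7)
The plan is to prove existence by constructing the series solution explicitly, verifying the required regularity using decay estimates for the Mittag-Leffler function, and to prove uniqueness by projecting any solution onto the eigenbasis and reducing to scalar fractional ODEs.

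First, I would seek solutions of the form $u(t)=\sum_n u_n(t)\phi_n$ with $u_n(t)=\langle u(t),\phi_n\rangle$. Taking the inner product of \eqref{main-problem000} with $\phi_n$ and using self-adjointness of $A$ together with $A\phi_n=\lambda_n\phi_n$, each coefficient must satisfy the scalar fractional Cauchy problem
\begin{equation*}
\frac{d^\gamma u_n}{dt^\gamma}+\lambda_n u_n=0,\qquad u_n(0)=\langle u_0,\phi_n\rangle.
\end{equation*}
It is classical (see, e.g., \cite{Kilbas,P}) that this problem admits the unique solution $u_n(t)=E_{\gamma,1}(-\lambda_n t^\gamma)\langle u_0,\phi_n\rangle$, which can be verified by termwise differentiation of the series defining $E_{\gamma,1}$ and identification of the coefficients. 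This motivates the candidate \eqref{congthucnghiem}.

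Next, I would check that \eqref{congthucnghiem} actually defines a function with the required regularity. The key tool is the uniform bound
\begin{equation*}
0<E_{\gamma,1}(-\lambda_n t^\gamma)\le \frac{C}{1+\lambda_n t^\gamma}\qquad(\lambda_n>0,\ t\ge 0),
\end{equation*}
valid for $\gamma\in(0,1)$. Using $|E_{\gamma,1}(-\lambda_n t^\gamma)|\le 1$ and Parseval yields $\|u(t)\|\le\|u_0\|$, and dominated convergence in $\ell^2$ gives $u\in C([0,T],\mathbb{H})$. For $t>0$, the stronger bound implies $\lambda_n E_{\gamma,1}(-\lambda_n t^\gamma)\le C/t^\gamma$, so $\sum_n \lambda_n^2 u_n(t)^2<\infty$, i.e., $u(t)\in D(A)$. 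Differentiating the Mittag-Leffler function termwise gives $\frac{d}{dt}E_{\gamma,1}(-\lambda_n t^\gamma)=-\lambda_n t^{\gamma-1}E_{\gamma,\gamma}(-\lambda_n t^\gamma)$ with an analogous $1/(1+\lambda_n t^\gamma)$ bound on $E_{\gamma,\gamma}$; this lets me justify termwise differentiation in $t$, yielding $u\in C^1((0,T),\mathbb{H})$. Substituting the termwise Caputo derivative back into the series shows that the PDE holds in $\mathbb{H}$ for $t\in(0,T)$.

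For uniqueness, I would take two solutions $u,\tilde u$, set $w=u-\tilde u$, and project: $w_n(t)=\langle w(t),\phi_n\rangle$ satisfies the scalar problem with zero initial data, hence $w_n\equiv 0$ for every $n$ by the uniqueness for the scalar fractional ODE (e.g., via the Laplace-transform characterization or Gronwall-type estimates for Caputo equations in \cite{Kilbas}). Completeness of $\{\phi_n\}$ then gives $w\equiv 0$.

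The main obstacle will be the regularity bookkeeping: justifying that the formal series can be differentiated in $t$ and acted upon by $A$ termwise, and that the Caputo operator commutes with the infinite sum. All of this rests on the decay estimates for $E_{\gamma,1}$ and $E_{\gamma,\gamma}$, and on the fact that the constants in these bounds are uniform in $\lambda_n\ge\lambda_1>0$; the singularity of the derivative at $t=0$ is compatible with $u\in C^1((0,T),\mathbb{H})$ but precludes higher regularity at the endpoint, which is why the endpoint is excluded from the definition of a solution.
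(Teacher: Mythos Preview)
The paper does not prove this theorem at all: it is stated with a citation to \cite{hdt} (and the underlying argument goes back to Sakamoto--Yamamoto \cite{Sakamoto-Yamamoto-Jmaa}), and no proof appears in the present manuscript. Your proposal is the standard separation-of-variables argument used in those references---reduce to scalar Caputo ODEs with Mittag-Leffler solutions, control the series via the bound $E_{\gamma,1}(-x)\lesssim 1/(1+x)$, and obtain uniqueness by projecting onto the eigenbasis---so there is nothing to compare against in this paper, and your outline matches the approach in the cited literature.

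One small point worth tightening: for uniqueness you implicitly assume that the Caputo derivative commutes with the projection $\langle\cdot,\phi_n\rangle$, i.e., that $\langle \partial_t^\gamma w,\phi_n\rangle=\partial_t^\gamma\langle w,\phi_n\rangle$. This follows from the integral definition of the Caputo derivative and the assumed regularity $w\in C^1((0,T),\mathbb{H})\cap C([0,T],\mathbb{H})$, but it is worth stating explicitly since otherwise the reduction to the scalar ODE is formal.
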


\begin{lemma}\label{Young's inequality} {\rm (Young's inequality)}
If $a,b$  are nonnegative numbers and $m,n$ are positive numbers
such that $\dfrac{1}{m}+\dfrac{1}{n}=1$, then $ ab \le
\dfrac{a^m}{m}+ \dfrac{b^n}{n}. $
\end{lemma}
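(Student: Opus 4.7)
The plan is to reduce to the case $a,b>0$ (for if $a=0$ or $b=0$ the inequality $0\le a^m/m+b^n/n$ is immediate from nonnegativity) and then invoke the convexity of the exponential function via a weighted-average representation of the product. Specifically, I would write
\begin{align*}
ab = \exp\left(\tfrac{1}{m}\ln a^m + \tfrac{1}{n}\ln b^n\right),
\end{align*}
and apply Jensen's inequality to the convex function $\exp$ with weights $\tfrac{1}{m}$ and $\tfrac{1}{n}$, which are positive and sum to $1$ by the hypothesis $\tfrac{1}{m}+\tfrac{1}{n}=1$. This bounds the right-hand side above by $\tfrac{1}{m}\exp(\ln a^m)+\tfrac{1}{n}\exp(\ln b^n)=\tfrac{a^m}{m}+\tfrac{b^n}{n}$, which is exactly the claim.

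An equally short alternative, useful if one prefers to avoid Jensen, is a one-variable calculus argument: fix $b\ge 0$ and consider $f(a):=\tfrac{a^m}{m}+\tfrac{b^n}{n}-ab$ on $[0,\infty)$. Then $f'(a)=a^{m-1}-b$ vanishes at $a^{*}=b^{1/(m-1)}$, and since $f''(a)=(m-1)a^{m-2}\ge 0$ for $a\ge 0$, this critical point is the global minimum. A brief computation using the identity $(m-1)(n-1)=1$ (which is equivalent to $\tfrac{1}{m}+\tfrac{1}{n}=1$) shows $f(a^{*})=0$, hence $f(a)\ge 0$ for all $a\ge 0$, yielding the inequality.

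There is no serious obstacle here; this is a classical textbook inequality and either route produces a clean proof in a few lines. The only subtleties are disposing of the boundary cases $a=0$ or $b=0$ (where $\ln$ is undefined in the first approach) and correctly invoking the conjugate-exponent identity when simplifying the critical value in the second approach.
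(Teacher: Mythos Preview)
Your proposal is correct; both the Jensen/convexity argument and the one-variable calculus argument are standard, complete proofs of Young's inequality, and your handling of the boundary cases $a=0$ or $b=0$ is appropriate. Note that the paper itself does not supply a proof of this lemma at all---it is simply stated as a classical auxiliary result and used later (e.g., in the proof of Theorem~\ref{Theorem3})---so there is no approach in the paper to compare against; your write-up would serve perfectly well as a self-contained justification.
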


\begin{lemma}\label{lemmamoi} {\rm\cite{jwang}} For any $\lambda_n$
satisfying $\lambda_n \geq \lambda_1>0$, there exist positive
constants $\overline{C}_1$ and $\overline{C}_2$ depending on $ \gamma,\ T,\
\lambda_1$ such that
\begin{align*}
\dfrac{\overline{C}_1}{\lambda_n}\le
E_{\gamma,1}(-\lambda_nT^\gamma)\le\dfrac{\overline{C}_2}{\lambda_n}.
\end{align*}
\end{lemma}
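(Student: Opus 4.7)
The plan is to prove the two-sided bound by establishing that the auxiliary function
\[
h(z) := z\, E_{\gamma,1}(-z), \qquad z \geq \lambda_1 T^{\gamma},
\]
is bounded above and below by strictly positive constants. Once this is done, the claim follows immediately by substituting $z=\lambda_n T^{\gamma}$, since
\[
E_{\gamma,1}(-\lambda_n T^{\gamma}) \;=\; \frac{h(\lambda_n T^{\gamma})}{T^{\gamma}\lambda_n},
\]
so that $\overline{C}_1$ and $\overline{C}_2$ can be taken as $T^{-\gamma}\inf h$ and $T^{-\gamma}\sup h$, respectively.

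To bound $h$ from above and below, I would invoke two standard properties of the Mittag-Leffler function for $\gamma\in(0,1)$ (see Podlubny~\cite{P} or Kilbas~\cite{Kilbas}). First, the complete monotonicity of $z\mapsto E_{\gamma,1}(-z)$ on $[0,\infty)$ (equivalently, positivity: $E_{\gamma,1}(-z)>0$ for every $z\geq 0$), which guarantees that $h$ is continuous and strictly positive on $[\lambda_1 T^{\gamma},\infty)$. Second, the classical asymptotic expansion
\[
E_{\gamma,1}(-z) \;=\; \frac{1}{\Gamma(1-\gamma)\,z} \;+\; O\!\left(z^{-2}\right) \qquad \text{as } z\to+\infty,
\]
valid for $0<\gamma<1$, which implies
\[
\lim_{z\to+\infty} h(z) \;=\; \frac{1}{\Gamma(1-\gamma)} \;>\; 0.
\]

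Combining these: $h$ is continuous and strictly positive on any compact subinterval $[\lambda_1 T^{\gamma},M]$, and for $z>M$ with $M$ chosen large enough it stays within, say, a factor of $2$ of the positive limit $1/\Gamma(1-\gamma)$. Therefore $\inf_{z\geq \lambda_1 T^{\gamma}} h(z) > 0$ and $\sup_{z\geq \lambda_1 T^{\gamma}} h(z) < \infty$, and both bounds depend only on $\gamma$, $T$, and $\lambda_1$. This yields the desired constants $\overline{C}_1,\overline{C}_2$.

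The main obstacle is not any computation but ensuring the two ingredients (strict positivity and the first-order asymptotic as $z\to\infty$) are cited correctly; these are standard but essential. The authors have already referenced \cite{jwang} for this lemma, so an alternative would be simply to appeal to that reference. If instead one wants a self-contained proof, the positivity can be derived from the Bernstein-type integral representation
\[
E_{\gamma,1}(-z) \;=\; \int_0^\infty e^{-zt}\,K_{\gamma}(t)\,dt, \qquad K_{\gamma}(t) \;=\; \frac{\sin(\gamma\pi)}{\pi}\cdot\frac{t^{\gamma-1}}{t^{2\gamma}+2t^{\gamma}\cos(\gamma\pi)+1},
\]
and the asymptotic then follows by applying Watson's lemma to this integral. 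With these tools in hand, the bounds on $h$, and hence the lemma, follow routinely.
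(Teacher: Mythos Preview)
Your argument is correct. The paper, however, does not prove this lemma at all: it is stated with a citation to \cite{jwang} and no proof is given. So there is nothing to compare against on the paper's side beyond the bare reference, which you already noted as an alternative.

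One remark that may streamline your write-up: the very next lemma in the paper, Lemma~\ref{lemmamoi01}(a) (also cited without proof, from \cite{Liu-Yamamoto-AA}), already asserts the uniform two-sided bound
\[
\frac{\overline{C}_3}{\Gamma(1-\gamma)}\cdot\frac{1}{1+\lambda_n T^\gamma}\;\le\; E_{\gamma,1}(-\lambda_n T^\gamma)\;\le\;\frac{\overline{C}_4}{\Gamma(1-\gamma)}\cdot\frac{1}{1+\lambda_n T^\gamma},
\]
and since $T^\gamma\lambda_n\le 1+\lambda_n T^\gamma\le (\lambda_1^{-1}+T^\gamma)\lambda_n$ for $\lambda_n\ge\lambda_1$, the present lemma follows in one line. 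This is the same mechanism as your argument (positivity plus the $1/z$ asymptotic), just packaged as a global inequality rather than a compactness-plus-limit argument; your route has the virtue of being fully self-contained once the complete monotonicity and Watson-lemma asymptotics are invoked.
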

\begin{lemma}\label{lemmamoi01} {\rm\cite{Liu-Yamamoto-AA}} There exist positive
constants $\overline{C}_3,\overline{C}_4$ and $\overline{C}_5$
depending on $ \gamma$ such that
\begin{align*}
&a)\quad
\dfrac{\overline{C}_3}{\Gamma(1-\gamma)}.\dfrac{1}{1-x}\le
E_{\gamma,1}(x)\le\dfrac{\overline{C}_4}{\Gamma(1-\gamma)}.\dfrac{1}{1-x},~\text{for
all}~x\le 0.
\\
&b)\quad
|E_{\gamma,0}(x)|\le\dfrac{\overline{C}_5}{\Gamma(-\gamma)}\dfrac{1}{1-x},~\mbox{for
all}~x\leq0.
\end{align*}
\end{lemma}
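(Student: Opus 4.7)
My plan is to substitute $y = -x \ge 0$ and study the two functions $g(y) := (1+y)\,E_{\gamma,1}(-y)$ and $h(y) := (1+y)\,|E_{\gamma,0}(-y)|$ on $[0,\infty)$. The assertions (a) and (b) amount to showing that $g$ is bounded above and below by positive constants and that $h$ is bounded above. Since $E_{\gamma,\beta}$ is entire, both $g$ and $h$ are continuous on $[0,\infty)$, so the argument reduces to (i) computing the endpoint values/limits and (ii) ruling out zeros of $g$ in between.

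For the endpoints, direct evaluation of the defining power series gives $g(0)=E_{\gamma,1}(0)=1$ and $h(0)=|E_{\gamma,0}(0)|=0$ (the $k=0$ term vanishes because $1/\Gamma(0)=0$). For the behavior as $y\to+\infty$, I would invoke the classical asymptotic expansion
\begin{equation*}
E_{\gamma,\beta}(-y) \;=\; -\sum_{k=1}^{N}\frac{(-y)^{-k}}{\Gamma(\beta-k\gamma)} \;+\; O\!\bigl(y^{-N-1}\bigr),\qquad y\to+\infty,
\end{equation*}
valid for $\gamma\in(0,1)$ and any $\beta\in\mathbb{R}$ (see, e.g., Podlubny \cite{P} or Kilbas et~al.\ \cite{Kilbas}). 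Specialising to $N=1$ with $\beta=1$ gives $E_{\gamma,1}(-y)\sim 1/(y\,\Gamma(1-\gamma))$, so $g(y)\to 1/\Gamma(1-\gamma)$; with $\beta=0$ it gives $E_{\gamma,0}(-y)\sim 1/(y\,\Gamma(-\gamma))$, so $h(y)\to 1/|\Gamma(-\gamma)|$. Continuity together with these finite limits already produces the upper bounds, yielding the constants $\overline{C}_4$ in (a) and $\overline{C}_5$ in (b).

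For the lower bound in (a), I would invoke Pollard's theorem, which asserts that for $\gamma\in(0,1)$ the function $y\mapsto E_{\gamma,1}(-y)$ is completely monotone on $[0,\infty)$ and is in particular strictly positive. Combined with the positive limits $g(0)=1$ and $g(+\infty)=1/\Gamma(1-\gamma)>0$, continuity forces $g$ to attain a strictly positive minimum on $[0,\infty)$, which furnishes $\overline{C}_3$. The main technical burden lies in proving the asymptotic expansion quoted above; the textbook derivation uses a Hankel-contour integral representation of $E_{\gamma,\beta}$, expands $1/(\zeta-z)$ as a geometric series for large $|z|$, and interchanges sum and contour integration, identifying each term via the Hankel representation of $1/\Gamma$. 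Since the lemma is cited to \cite{Liu-Yamamoto-AA}, in the present paper these classical facts are used as a black box rather than re-derived.
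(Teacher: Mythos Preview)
Your proposal is correct, and you have already identified the key point yourself: in the paper this lemma carries no proof at all --- it is simply quoted from \cite{Liu-Yamamoto-AA} as a known fact. So there is no ``paper's proof'' to compare against; you have supplied a valid argument where the paper provides none.

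For the record, your outline is sound. The substitution $y=-x$ and the study of $g(y)=(1+y)E_{\gamma,1}(-y)$ and $h(y)=(1+y)|E_{\gamma,0}(-y)|$ is exactly the right reduction. Continuity plus the asymptotic expansion of $E_{\gamma,\beta}(-y)$ for large $y$ (from \cite{P} or \cite{Kilbas}) gives the upper bounds, and Pollard's complete monotonicity theorem for $E_{\gamma,1}(-y)$ with $\gamma\in(0,1)$ supplies the strict positivity needed for the lower bound in (a). One minor remark: in part (b) the quantity $\Gamma(-\gamma)$ is negative for $\gamma\in(0,1)$, so the inequality as written in the paper only makes sense with $\overline C_5/\Gamma(-\gamma)$ interpreted as a positive constant (equivalently, with $|\Gamma(-\gamma)|$); your argument, which bounds $h$ by a positive constant, is consistent with that reading.
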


\begin{lemma}\label{0lemma1}{\rm(\cite{Liu-Yamamoto-AA}, p. 1779)}
Let $\gamma\in (0,1)$ and $t>0$. We have
\begin{align}
 \dfrac{d}{ds} E_{\gamma, 1}(s t^\gamma)
 =\frac{1}{s t^{\gamma}}E_{\gamma, 0}(s t^\gamma),~\mbox{for all}\quad s\in\mathbb{ R},~s\neq
 0.
\end{align}
\end{lemma}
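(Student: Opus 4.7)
The plan is to prove this identity by direct termwise differentiation of the Mittag-Leffler power series \eqref{Mittag-Leffler function}, which is permissible because $E_{\gamma,1}$ is an entire function and hence its series converges absolutely and uniformly on every compact subset of $\mathbb{C}$. Concretely, I would substitute $z = st^\gamma$ into
\begin{equation*}
E_{\gamma,1}(z) = \sum_{k=0}^\infty \frac{z^k}{\Gamma(k\gamma+1)}
\end{equation*}
and differentiate under the summation using the chain rule, which produces
\begin{equation*}
\frac{d}{ds}E_{\gamma,1}(st^\gamma) = \sum_{k=1}^\infty \frac{k\,t^\gamma (st^\gamma)^{k-1}}{\Gamma(k\gamma+1)}.
\end{equation*}

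Next, I would apply the functional equation $\Gamma(k\gamma+1) = k\gamma\,\Gamma(k\gamma)$ (valid for $k\ge 1$) to cancel the factor of $k$ from the numerator, then extract a factor of $1/(st^\gamma)$ so that the remaining sum is indexed by $(st^\gamma)^k/\Gamma(k\gamma)$ for $k\ge 1$. Because the $k=0$ term in $E_{\gamma,0}(z)=\sum_{k=0}^\infty z^k/\Gamma(k\gamma)$ vanishes under the standard convention $1/\Gamma(0)=0$ (recall that $\Gamma$ has a pole at $0$), this remainder is exactly $E_{\gamma,0}(st^\gamma)$. Rearranging the surviving constants delivers the claimed identity
\begin{equation*}
\frac{d}{ds}E_{\gamma,1}(st^\gamma)=\frac{1}{st^\gamma}E_{\gamma,0}(st^\gamma),
\end{equation*}
with the hypothesis $s\neq 0$ being precisely what is needed to make the factor $1/(st^\gamma)$ well-defined.

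There is essentially no obstacle here beyond bookkeeping: the only non-routine point is justifying termwise differentiation, which follows immediately from the entire-function property of $E_{\gamma,1}$, and the only algebraic subtlety is the correct handling of the $k=0$ term when recognizing the series as $E_{\gamma,0}$. No properties of the operator $A$, of $u$, or of the backward problem \eqref{main-problem} are used, so the lemma is purely an identity for the scalar Mittag-Leffler function.
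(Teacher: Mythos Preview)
Your approach by termwise differentiation of the entire series \eqref{Mittag-Leffler function} is the standard one, and the paper itself gives no proof at all: the lemma is simply quoted from \cite{Liu-Yamamoto-AA}. So there is nothing in the paper to compare your argument against.

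That said, there is a genuine arithmetic slip in your final step. After applying $\Gamma(k\gamma+1)=k\gamma\,\Gamma(k\gamma)$ you obtain
\[
\frac{d}{ds}E_{\gamma,1}(st^\gamma)=\sum_{k=1}^\infty\frac{t^\gamma(st^\gamma)^{k-1}}{\gamma\,\Gamma(k\gamma)}
=\frac{1}{\gamma s}\sum_{k=1}^\infty\frac{(st^\gamma)^{k}}{\Gamma(k\gamma)}
=\frac{1}{\gamma s}\,E_{\gamma,0}(st^\gamma),
\]
not $\dfrac{1}{st^\gamma}E_{\gamma,0}(st^\gamma)$ as stated. The factor in front is $1/(\gamma s)$, because $t^\gamma(st^\gamma)^{k-1}=\tfrac{1}{s}(st^\gamma)^k$ and the $\gamma$ from the functional equation remains in the denominator. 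Your phrase ``rearranging the surviving constants delivers the claimed identity'' therefore does not go through.

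This strongly suggests the lemma as printed carries a typo (the prefactor should be $1/(\gamma s)$ rather than $1/(st^\gamma)$). Fortunately this is harmless for the rest of the paper: in the only place the lemma is used, namely the mean-value estimate leading to \eqref{sbt1}, the prefactor is immediately absorbed into a generic constant, and replacing $T^\gamma$ by $\gamma$ changes nothing in the subsequent bounds. But as a proof of the identity exactly as written, your computation does not---and cannot---close.
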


%
%

\begin{lemma} For simplicity of notation, we denote $B_\alpha := (I + \alpha A^b)^{-1}$. We have the following representations for $A_\alpha $ and $B_\alpha$:
	\begin{align}
    &A_\alpha v :=\sum\limits_{n=1}^\infty\left(\dfrac{\lambda_n}{1+\alpha\lambda_n^b}\right)
\left<v,\phi_n\right>\phi_n, \label{Aalpha} \\
	&B_\alpha v :=\sum\limits_{n=1}^\infty\left(\dfrac{1}{1+\alpha\lambda_n^b}\right)\left<v,\phi_n\right>\phi_n. \label{Balpha}
\end{align}
\end{lemma}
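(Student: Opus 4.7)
The plan is to argue by spectral decomposition using the orthonormal eigenbasis $\{\phi_n\}_{n\geq 1}$ of $A$. First I would expand an arbitrary $v \in \mathbb{H}$ as $v = \sum_{n=1}^\infty \langle v,\phi_n\rangle \phi_n$ and compute the action of $I+\alpha A^b$ on each eigenfunction: from the definition \eqref{Ab}, $A^b\phi_n = \lambda_n^b \phi_n$, so $(I+\alpha A^b)\phi_n = (1+\alpha\lambda_n^b)\phi_n$. Since $\lambda_n \geq \lambda_1 > 0$ and $\alpha > 0$, every factor $1+\alpha\lambda_n^b \geq 1$, which gives injectivity and boundedness below of $I+\alpha A^b$; hence the inverse $B_\alpha$ is well-defined and bounded on all of $\mathbb{H}$ with $B_\alpha \phi_n = (1+\alpha\lambda_n^b)^{-1}\phi_n$. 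Applying $B_\alpha$ term by term to the expansion of $v$ and using continuity of $B_\alpha$ together with Parseval's identity to justify interchange of $B_\alpha$ with the infinite sum yields formula \eqref{Balpha}.

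Next I would derive \eqref{Aalpha}. For $v \in D(A)$, the expansion $Av = \sum_{n=1}^\infty \lambda_n \langle v,\phi_n\rangle\phi_n$ converges in $\mathbb{H}$. Applying the bounded operator $B_\alpha$ and interchanging with the sum gives
\begin{equation*}
A_\alpha v = B_\alpha A v = \sum_{n=1}^\infty \lambda_n \langle v,\phi_n\rangle B_\alpha \phi_n = \sum_{n=1}^\infty \frac{\lambda_n}{1+\alpha\lambda_n^b}\langle v,\phi_n\rangle \phi_n.
\end{equation*}
Note that since $b \geq 1$, the multiplier $\lambda_n/(1+\alpha\lambda_n^b)$ is uniformly bounded in $n$ (its supremum is attained and finite for fixed $\alpha>0$), so in fact the right-hand side converges for every $v \in \mathbb{H}$; thus $A_\alpha$ extends to a bounded operator on $\mathbb{H}$ and \eqref{Aalpha} holds for all $v \in \mathbb{H}$, not just $v \in D(A)$.

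The only technical point is justifying the term-by-term application of $B_\alpha$ to the infinite series defining $v$ (and $Av$). This follows from continuity: the partial sums $S_N = \sum_{n=1}^N \langle v,\phi_n\rangle\phi_n$ converge to $v$ in $\mathbb{H}$, so $B_\alpha S_N \to B_\alpha v$, and on the other hand $B_\alpha S_N = \sum_{n=1}^N (1+\alpha\lambda_n^b)^{-1}\langle v,\phi_n\rangle \phi_n$. The same argument with $S_N$ replaced by $\sum_{n=1}^N \lambda_n\langle v,\phi_n\rangle \phi_n$ (which converges to $Av$ when $v\in D(A)$, or is handled directly via the uniformly bounded multipliers $\lambda_n/(1+\alpha\lambda_n^b)$ in general) gives \eqref{Aalpha}. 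I expect no real obstacle here; the lemma is essentially a bookkeeping consequence of the functional calculus for the self-adjoint operator $A$.
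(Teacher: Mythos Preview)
Your argument is correct and is the natural spectral-calculus derivation. The paper actually states this lemma without proof, treating the representations \eqref{Aalpha}--\eqref{Balpha} as immediate consequences of the definition \eqref{Ab} and the eigenbasis expansion; your write-up simply spells out the details (boundedness of $B_\alpha$, interchange with the series, extension of $A_\alpha$ to all of $\mathbb{H}$ via the bounded multipliers) that the authors leave implicit.
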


\section{Regularization methods and error estimates} \label{regularization}
In this section, we regularize problem (\ref{main-problem}) by the backward Sobolev-type equation (\ref{zz1}) and
propose \textit{a priori} and  \textit{a posteriori} methods for choosing the
regularization parameter $\alpha$ which yield error estimates of
H\"older type. 

\subsection{\textit{A priori} parameter choice rule}
\begin{theorem}\label{Theorem2} For $b\geq 1$,
 problem (\ref{zz1}) is well-posed. Moreover, if the solution $u(t)$ of problem (\ref{main-problem}) satisfies
\begin{align}\label{constraint2sm}
\|u(0)\|_p\le E, \quad p>0,\ E>\varepsilon,
\end{align}
then the following statements hold:
\par(i) If $0<p<b$, then with
$\alpha=\left(\dfrac{\varepsilon}{E}\right)^{\frac{b}{p+1}}$, there
exists a  constant $C_1$ such that
\begin{equation*}
\|u(0)-v_\alpha(0)\|\le
C_1\varepsilon^{\frac{p}{p+1}}E^{\frac{1}{p+1}}.
\end{equation*}

\par(ii) If $p \geq b$, then with
$\alpha=\left(\dfrac{\varepsilon}{E}\right)^{\frac{b}{b+1}}$, there
exists  a constant $C_2$ such that
\begin{equation*}
\|u(0)-v_\alpha(0)\|\le C_2
\varepsilon^{\frac{b}{b+1}}E^{\frac{1}{b+1}}.
\end{equation*}

\end{theorem}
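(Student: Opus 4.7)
The plan is to first establish existence, uniqueness, and an explicit series representation for the solution $v_\alpha$ of (\ref{zz1}), then decompose the error into an approximation part (from replacing $A$ by $A_\alpha$) and a noise-amplification part, bound each by combining the Mittag-Leffler estimates (Lemmas \ref{lemmamoi}, \ref{lemmamoi01}, \ref{0lemma1}) with the source condition, and finally balance the two contributions via the prescribed choice of $\alpha$.

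\textbf{Well-posedness.} Because $A$ and $A^b$ share the eigenbasis $\{\phi_n\}$, the operator $A_\alpha$ has eigenvalues $\mu_n := \lambda_n/(1+\alpha\lambda_n^b)$, and elementary calculus shows $\sup_n \mu_n \le C(b)\alpha^{-1/b}$ for $b\ge 1$. Separating variables in (\ref{zz1}) reduces it to scalar Caputo problems, so Theorem \ref{0Theorem1} together with the non-vanishing lower bound on $E_{\gamma,1}(-\mu_n T^\gamma)$ from Lemma \ref{lemmamoi01}(a) yields the explicit formula
\begin{equation*}
v_\alpha(t) \;=\; \sum_{n=1}^\infty \frac{E_{\gamma,1}(-\mu_n t^\gamma)}{E_{\gamma,1}(-\mu_n T^\gamma)}\,\langle f,\phi_n\rangle\,\phi_n,
\end{equation*}
which is the unique solution, establishing well-posedness.

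\textbf{Error decomposition and bounds.} Let $u^*(T):=\sum_n E_{\gamma,1}(-\lambda_n T^\gamma)\,u_{0,n}\phi_n$ (so $\|f-u^*(T)\|\le\varepsilon$) and let $\widetilde v_\alpha$ be the solution of (\ref{zz1}) with $f$ replaced by $u^*(T)$. Split
\begin{equation*}
u(0)-v_\alpha(0) \;=\; [u(0)-\widetilde v_\alpha(0)] \;+\; [\widetilde v_\alpha(0)-v_\alpha(0)].
\end{equation*}
For the noise term, Lemma \ref{lemmamoi01}(a) gives $1/E_{\gamma,1}(-\mu_n T^\gamma)\le C(1+\mu_n T^\gamma)$, and combined with the bound on $\sup_n \mu_n$ this yields $\|\widetilde v_\alpha(0)-v_\alpha(0)\|\le C\varepsilon\alpha^{-1/b}$. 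For the approximation term, the key estimate is
\begin{equation*}
1 - \frac{E_{\gamma,1}(-\lambda_n T^\gamma)}{E_{\gamma,1}(-\mu_n T^\gamma)} \;\le\; K\,\alpha\lambda_n^b,
\end{equation*}
obtained by applying the mean value theorem to $\mu\mapsto E_{\gamma,1}(-\mu T^\gamma)$ via Lemma \ref{0lemma1}, controlling $|E_{\gamma,0}|$ by Lemma \ref{lemmamoi01}(b), and using the identity $\lambda_n-\mu_n=\alpha\lambda_n^b\mu_n$. Combined with the trivial bound $\le 1$ and the source condition $\sum_n\lambda_n^{2p}|u_{0,n}|^2\le E^2$, the problem reduces to estimating $\sup_{\lambda\ge\lambda_1}\min(1,K\alpha\lambda^b)/\lambda^p$, whose maximum occurs at the transition $K\alpha\lambda^b=1$. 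A two-regime analysis gives $\|u(0)-\widetilde v_\alpha(0)\|\le CE\alpha^{p/b}$ for $0<p<b$ and $\|u(0)-\widetilde v_\alpha(0)\|\le CE\alpha$ for $p\ge b$.

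\textbf{Balancing and main obstacle.} Adding the approximation and noise bounds gives $\|u(0)-v_\alpha(0)\|\le C(E\alpha^{\min(p,b)/b}+\varepsilon\alpha^{-1/b})$; equating the two terms yields the prescribed choices of $\alpha$ and, after substitution, the stated Hölder rates. The chief technical obstacle is the sharp approximation inequality $1-E_{\gamma,1}(-\lambda_n T^\gamma)/E_{\gamma,1}(-\mu_n T^\gamma)\le K\alpha\lambda_n^b$: merely combining the two-sided bounds of Lemma \ref{lemmamoi01}(a) produces only an $O(1)$ estimate, so one must differentiate through the Mittag-Leffler function and exploit the stronger decay of $E_{\gamma,0}$. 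A secondary delicate point is the saturation phenomenon at $p=b$—beyond which extra smoothness of $u(0)$ ceases to improve the approximation rate—which is exactly what forces the split between cases (i) and (ii).
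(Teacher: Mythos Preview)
Your proposal is correct and follows essentially the same approach as the paper: the same well-posedness argument via the explicit eigenexpansion, the same split of the error into an approximation part $u(0)-\widetilde v_\alpha(0)$ and a noise-propagation part, the same mean-value-theorem estimate on the Mittag--Leffler quotient combined with the trivial bound $\le 1$, and the same balancing of $\alpha$. The only cosmetic difference is that you package the approximation bound as $\sup_{\lambda\ge\lambda_1}\min(1,K\alpha\lambda^b)/\lambda^p$, whereas the paper splits the sum explicitly at $n_1=\min\{n:\lambda_n\ge\alpha^{-1/b}\}$ and invokes Lemma~\ref{lemmamoi} for the low-frequency denominator; your route via Lemma~\ref{lemmamoi01}(a) for the denominator is in fact slightly cleaner since it yields $\le K\alpha\lambda_n^b$ for all $n$ without needing $\lambda_{\alpha n}\ge\lambda_1/2$.
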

\begin{remark}\label{remark2}
We note that $\frac{p}{p+1}>\frac{2}{3}$ when $p>2$ and
$\frac{b}{b+1}>\frac{2}{3}$ when $b>2$. Therefore, the order of our
error estimates is greater than $\frac{2}{3}$ when $2<p<b$ or
$p\geq b>2$.
\end{remark}

\subsection{\textit{A posteriori} parameter choice rule}
\label{section-a-posteriori}

\begin{theorem} \label{Theorem3}
Let $b>1$ and $B_{\alpha}$ be given by \eqref{Balpha}. Assume
that $0<\varepsilon < \|f\|$ and $\tau>1$ is a constant satisfying
$0<\tau\varepsilon\le\|f\|$. Then, there exists a unique
number $\alpha_\varepsilon>0$ such that
\begin{equation}
\|B_{\alpha_\varepsilon}f-f\|=\tau\varepsilon. \label{a-posteriori}
\end{equation}
Let  $u(t)$ and $v_{\alpha_{\epsilon}}$ be the solutions of \eqref{main-problem} and \eqref{zz1}, respectively. Then for $\varepsilon$ small enough there exist  constants $C_3$ and $C_4 $ such
that
\begin{itemize} \item[i)]
\begin{equation*}
\|u(0)-v_{\alpha_\varepsilon}(0)\|\le
C_3\varepsilon^{\frac{p}{p+1}}\| u(0)\|_p^{\frac{1}{p+1}} \text{ for } p < b-1.
\end{equation*}
\item [ii)] 
\begin{equation*}
\|u(0)-v_{\alpha_\varepsilon}(0)\|\le
C_4\varepsilon^{\frac{p}{p+1}}\|u(0)\|_p^{\frac{1}{p+1}}
+\varepsilon^{\frac{b-1}{b}}\|u(0)\|_p^{\frac{1}{b}}, \text{ for } p \geq b-1.
\end{equation*}
\end{itemize}
\end{theorem}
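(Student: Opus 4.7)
My plan is to follow the standard a posteriori analysis in three stages: prove existence and uniqueness of $\alpha_\varepsilon$, split the error using the auxiliary solution $\tilde v_\alpha$ of \eqref{zz1} obtained by replacing $f$ by the exact terminal datum $u(T)$, and sharply estimate the bias and noise-propagation parts via the spectral representations together with Lemmas \ref{lemmamoi}, \ref{lemmamoi01}, and \ref{0lemma1}. For existence, expanding
\begin{equation*}
\|B_\alpha f-f\|^2=\sum_{n=1}^\infty\left(\frac{\alpha\lambda_n^b}{1+\alpha\lambda_n^b}\right)^2|\langle f,\phi_n\rangle|^2,
\end{equation*}
I would observe that each summand is continuous and strictly increasing in $\alpha$; dominated convergence with envelope $|\langle f,\phi_n\rangle|^2$ promotes this to the full sum, which sweeps $(0,\|f\|^2)$, so the hypothesis $0<\tau\varepsilon\le\|f\|$ produces a unique $\alpha_\varepsilon$ by the intermediate value theorem.

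For the bias, set $\mu_n:=\lambda_n/(1+\alpha\lambda_n^b)$ and $u_{0,n}:=\langle u(0),\phi_n\rangle$. Applying Theorem \ref{0Theorem1} to $A_\alpha$, the $n$th Fourier coefficient of $u(0)-\tilde v_\alpha(0)$ equals $m_n u_{0,n}$ with
\begin{equation*}
m_n:=1-\frac{E_{\gamma,1}(-\lambda_n T^\gamma)}{E_{\gamma,1}(-\mu_n T^\gamma)}.
\end{equation*}
A mean-value-theorem argument using Lemma \ref{0lemma1} together with the pointwise bounds of Lemma \ref{lemmamoi01} gives $0\le m_n\le C\alpha\lambda_n^b/(1+\alpha\lambda_n^b)$. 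Combined with the lower bound $E_{\gamma,1}(-\lambda_n T^\gamma)\ge \overline C_1/\lambda_n$ from Lemma \ref{lemmamoi}, this yields both $\sum_n\lambda_n^{-2}(m_n u_{0,n})^2\le C\|u(T)-B_\alpha u(T)\|^2$ and the trivial $\sum_n\lambda_n^{2p}(m_n u_{0,n})^2\le\|u(0)\|_p^2$. H\"older's inequality with exponents $(p+1)/p$ and $p+1$ applied to these two sums then produces
\begin{equation*}
\|u(0)-\tilde v_\alpha(0)\|\le C\|u(T)-B_\alpha u(T)\|^{p/(p+1)}\|u(0)\|_p^{1/(p+1)}\le C\varepsilon^{p/(p+1)}\|u(0)\|_p^{1/(p+1)},
\end{equation*}
the final step using $\|u(T)-B_\alpha u(T)\|\le(\tau+1)\varepsilon$ (from $\|B_\alpha-I\|\le 1$ and the triangle inequality). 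This bound is uniform in $p>0$.

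For the noise, Lemma \ref{lemmamoi01}(a) together with the optimization $\sup_n\mu_n\le C\alpha^{-1/b}$ (valid since $b>1$, by critical-point analysis of $\lambda\mapsto\lambda/(1+\alpha\lambda^b)$) gives $\|\tilde v_\alpha(0)-v_\alpha(0)\|\le C\alpha_\varepsilon^{-1/b}\varepsilon$, so I need a lower bound on $\alpha_\varepsilon$. From $\|B_{\alpha_\varepsilon}u(T)-u(T)\|\ge(\tau-1)\varepsilon$ (the reverse triangle inequality combined with $\|B_\alpha-I\|\le 1$) together with the estimate $\|B_\alpha u(T)-u(T)\|\le C\sup_n[\alpha\lambda_n^{b-1-p}/(1+\alpha\lambda_n^b)]\|u(0)\|_p$ (which uses the upper Mittag-Leffler bound $E_{\gamma,1}(-\lambda_n T^\gamma)\le \overline C_2/\lambda_n$), a case split controls the supremum. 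When $p\le b-1$, the elementary bound $\sup_{x\ge 0}x^s/(1+x)\le C$ for $s\in[0,1]$ gives $\sup\le C\alpha^{(p+1)/b}$, hence $\alpha_\varepsilon\ge c(\varepsilon/\|u(0)\|_p)^{b/(p+1)}$ and noise $\le C\varepsilon^{p/(p+1)}\|u(0)\|_p^{1/(p+1)}$, producing (i). When $p>b-1$, the supremum is attained at the boundary mode $\lambda_1$ with value $\le C\alpha$, so $\alpha_\varepsilon\ge c\varepsilon/\|u(0)\|_p$ and noise $\le C\varepsilon^{(b-1)/b}\|u(0)\|_p^{1/b}$, giving the saturation term of (ii).

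The main technical obstacle is the saturation phenomenon itself: although the bias bound is uniform in $p$, the noise forces a lower bound on $\alpha_\varepsilon$ whose rate degrades once $p$ crosses $b-1$, because the worst spectral mode for $\|B_\alpha u(T)-u(T)\|$ migrates from a bulk one of size $\sim\alpha^{-1/b}$ down to the fixed boundary $\lambda_1$. The pointwise multiplier bound $m_n\le C\alpha\lambda_n^b/(1+\alpha\lambda_n^b)$ also requires careful combination of Lemma \ref{0lemma1} with Lemma \ref{lemmamoi01}(b), but involves no deeper ideas; everything else is routine once these pieces are in place.
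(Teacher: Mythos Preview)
Your proposal is correct and follows the same overall architecture as the paper: existence of $\alpha_\varepsilon$ via monotonicity of $\rho(\alpha)=\|B_\alpha f-f\|$, the splitting $u(0)-v_{\alpha_\varepsilon}(0)=(u(0)-\tilde v_{\alpha_\varepsilon}(0))+(\tilde v_{\alpha_\varepsilon}(0)-v_{\alpha_\varepsilon}(0))$ with $\tilde v_\alpha=w_\alpha$, the noise bound $\|\tilde v_\alpha(0)-v_\alpha(0)\|\le C\alpha^{-1/b}\varepsilon$ from Lemma~\ref{lemmasolution}, and the lower bound on $\alpha_\varepsilon$ via $(\tau-1)\varepsilon\le\|(I-B_{\alpha_\varepsilon})u(T)\|$ combined with $E_{\gamma,1}(-\lambda_nT^\gamma)\le\overline C_2/\lambda_n$ and the case split $p<b-1$ versus $p\ge b-1$ (your $\sup_x x^s/(1+x)$ bound is exactly the paper's Young-inequality step \eqref{z4} in disguise).

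The one genuine difference is your treatment of the bias $\|u(0)-\tilde v_\alpha(0)\|$. The paper partitions the spectrum at $n_2=\min\{n:\lambda_n\ge\alpha^{-1/b}\}$, uses the mean-value estimate only on the low-frequency block (where $\alpha\lambda_n^b\le 1$) and the trivial bound $m_n\le 1$ on the high-frequency block, then applies H\"older separately to each block and bounds the resulting factors by restricting the a~posteriori inequality \eqref{ac1}--\eqref{ac2} to each index range. You instead combine the mean-value bound with $m_n\le 1$ into a single global multiplier estimate $m_n\le C\,\alpha\lambda_n^b/(1+\alpha\lambda_n^b)$, apply H\"older once to the full sum with weights $\lambda_n^{-2}$ and $\lambda_n^{2p}$, and recognize the resulting $\sum_n\lambda_n^{-2}m_n^2u_{0,n}^2$ directly as (a constant times) $\|(I-B_\alpha)u(T)\|^2$ via the lower Mittag--Leffler bound. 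This avoids the index split entirely and is a cleaner route to the same inequality \eqref{sbl2}; the paper's decomposition is more hands-on but yields no additional information.
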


\bigskip

\begin{remark} In case (i) of Theorems \ref{Theorem2} and \ref{Theorem3},  the convergence rate
$E^{\frac{1}{p+1}}\delta^{\frac{p}{p+1}}$  is of optimal order as pointed out in \cite{hdt}.
\end{remark}

\begin{remark}\label{remark4}
Since $\frac{p}{p+1}>\frac{2}{3}$ when $p>2$ and
$\frac{b-1}{b}>\frac{2}{3}$ when $b>3$, the order of our
error estimates in Theorem \ref{Theorem3} is greater than $\frac{2}{3}$ when $2<p<b-1$ or
$p\geq b>3$.
\end{remark}

\begin{remark}The authors of
\cite{ewing,fury,huang,long94,long96,padon90,padon,renardy,sho}
used the Sobolev equations to regularize backward
parabolic equations, i.e., the case $\gamma=1$. The results were obtained only for the \textit{a priori} parameter choice rule and with  $b=1$. Here we not only obtain the optimal convergence rates for the backward time-fractional  equation (\ref{main-problem})  for both \textit{a priori} and \textit{a posteriori}
parameter choice rules, but also with an  arbitrary positive constant $b\geq 1$.
\end{remark}

\section{Proofs of the main results}\label{proofs}
\subsection{Proof of Theorem \ref{Theorem2} } \label{proof-th2}
First, we present some auxiliary results.
\begin{lemma}\label{lemmasolution}Problem (\ref{zz1}) admits a unique solution
\begin{align}\label{regularization-solution}
v_\alpha(t)=\sum\limits_{n=1}^\infty\dfrac{E_{\gamma,1}\left(-\lambda_{\alpha
n} t^\gamma\right)\left<f,\phi_n\right>\phi_n} {
E_{\gamma,1}\left(-\lambda_{\alpha n} T^\gamma\right)},\ \forall
t\in[0,T],
\end{align}
where $\lambda_{\alpha n}=\dfrac{\lambda_n}{1+\alpha\lambda_n^b}$.
Furthermore, there exists a constant $\overline{C}_6$ such that
$$
\|v_\alpha(t)\|\le
\overline{C}_6(1+\alpha^{-\frac{1}{b}})\|f\|,\ \forall t\in[0,T].
$$
\end{lemma}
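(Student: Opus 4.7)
The plan is to solve problem \eqref{zz1} by spectral decomposition, reducing the Sobolev-type fractional equation to a countable family of scalar fractional ODEs that can be solved using Theorem \ref{0Theorem1}. Since $A_\alpha = (I+\alpha A^b)^{-1}A$ shares the orthonormal eigenbasis $\{\phi_n\}$ with $A$ but with eigenvalues $\lambda_{\alpha n}=\lambda_n/(1+\alpha\lambda_n^b)$ as in \eqref{Aalpha}, the projection $v_n(t):=\langle v_\alpha(t),\phi_n\rangle$ of any solution of \eqref{zz1} satisfies
\begin{equation*}
\frac{d^\gamma v_n}{dt^\gamma}+\lambda_{\alpha n}v_n=0,\qquad v_n(T)=\langle f,\phi_n\rangle.
\end{equation*}

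By Theorem \ref{0Theorem1} applied to the forward problem with initial value $c_n:=v_n(0)$, the only candidate is $v_n(t)=E_{\gamma,1}(-\lambda_{\alpha n}t^\gamma)\,c_n$. Imposing the final condition at $t=T$ and using Lemma \ref{lemmamoi01}(a) to ensure $E_{\gamma,1}(-\lambda_{\alpha n}T^\gamma)>0$, I solve for $c_n$ and recover formula \eqref{regularization-solution}. Uniqueness is then automatic: if $f=0$ then every $c_n=0$, hence $v_\alpha\equiv 0$.

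For the stability bound, the technical heart is a uniform estimate $\lambda_{\alpha n}\le C_b\,\alpha^{-1/b}$ valid for $b\ge 1$. I would prove this by a dichotomy: if $\alpha\lambda_n^b\le 1$ then $\lambda_n\le \alpha^{-1/b}$, so $\lambda_{\alpha n}\le\lambda_n\le \alpha^{-1/b}$; if $\alpha\lambda_n^b\ge 1$ then $\lambda_{\alpha n}\le \lambda_n/(\alpha\lambda_n^b)=\alpha^{-1}\lambda_n^{1-b}\le \alpha^{-1}\alpha^{(b-1)/b}=\alpha^{-1/b}$. (Alternatively this is Young's inequality, Lemma \ref{Young's inequality}, with exponents $b$ and $b/(b-1)$.) Combining this with Lemma \ref{lemmamoi01}(a) gives
\begin{equation*}
\frac{E_{\gamma,1}(-\lambda_{\alpha n}t^\gamma)}{E_{\gamma,1}(-\lambda_{\alpha n}T^\gamma)}\le \frac{\overline{C}_4}{\overline{C}_3}\cdot\frac{1+\lambda_{\alpha n}T^\gamma}{1+\lambda_{\alpha n}t^\gamma}\le C\bigl(1+T^\gamma\alpha^{-1/b}\bigr),
\end{equation*}
uniformly in $n$ and $t\in[0,T]$, so Parseval's identity yields the advertised bound $\|v_\alpha(t)\|\le \overline{C}_6(1+\alpha^{-1/b})\|f\|$.

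The main obstacle is then verifying that the series \eqref{regularization-solution} is a solution in the sense required by the definition: continuity on $[0,T]$, $C^1$ on $(0,T)$, membership in $D(A_\alpha)$, and pointwise satisfaction of the fractional equation. Since $A_\alpha$ is bounded (its spectrum is contained in $[0,C_b\alpha^{-1/b}]$), $D(A_\alpha)=\mathbb{H}$ is immediate. Continuity on $[0,T]$ follows from the uniform-in-$n$ bound above and dominated convergence in $\ell^2$. For $C^1$ regularity on $(0,T)$, I would differentiate term by term using Lemma \ref{0lemma1}, rewrite the derivative factor as $(\gamma/t)\,E_{\gamma,0}(-\lambda_{\alpha n}t^\gamma)$, and invoke Lemma \ref{lemmamoi01}(b) to control it uniformly on every compact $[\delta,T]\subset(0,T)$, guaranteeing termwise differentiation is legitimate. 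Finally, interchanging the fractional integral in the Caputo derivative with the sum is justified by Fubini together with these bounds, which shows the series satisfies $\partial^\gamma v_\alpha/\partial t^\gamma+A_\alpha v_\alpha=0$ termwise and hence in $\mathbb{H}$.
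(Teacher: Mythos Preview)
Your proposal is correct and follows essentially the same route as the paper: spectral decomposition via \eqref{Aalpha}, the Mittag--Leffler lower bound from Lemma~\ref{lemmamoi01}(a) for the denominator, and the inequality $\lambda_{\alpha n}\le \alpha^{-1/b}$ (which the paper derives via Young's inequality, exactly the alternative you mention). The only cosmetic difference is that the paper bounds the numerator simply by $E_{\gamma,1}(-\lambda_{\alpha n}t^\gamma)\le 1$ rather than using the upper bound in Lemma~\ref{lemmamoi01}(a); your additional discussion of why the series genuinely solves \eqref{zz1} (continuity, $C^1$ regularity, $D(A_\alpha)=\mathbb{H}$) is more careful than the paper, which dispatches this with ``direct calculations.''
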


\begin{proof}
Formula (\ref{regularization-solution}) is obtained by direct
calculations. From inequalities $0\leq
E_{\gamma,1}\left(-\lambda_{\alpha n}
 T^\gamma\right)\leq 1$, it follows that
\begin{align*}
\|v_\alpha(t)\|^2&\leq\sum\limits_{n=1}^\infty\dfrac{\left<f,\phi_n\right>^2}
{ \left[E_{\gamma,1}\left(-\lambda_{\alpha n}
T^\gamma\right)\right]^2}.
\end{align*}
From Lemma~\ref{lemmamoi01}, we obtain
\begin{align}
\|v_\alpha(t)\|^2&\leq\sum\limits_{n=1}^\infty\left(\dfrac{\left<f,\phi_n\right>}
{ \frac{C}{\Gamma(1-\gamma)}\frac{1}{1+\lambda_{\alpha n}
T^\gamma}}\right)^2\nonumber \\
&\leq\left(\frac{\Gamma(1-\gamma)}{C}\right)^2\sum\limits_{n=1}^\infty(1+\lambda_{\alpha
n} T^\gamma)^2\left<f,\phi_n\right>^2.\label{cb0}
\end{align}
For $b>1$, we have  $$1+\alpha\lambda_{n}^b\geq
\frac{b-1}{b}.1^{\frac{b}{b-1}}+\frac{1}{b}\left(\alpha^{1/b}\lambda_{n}\right)^b\geq\alpha^{1/b}\lambda_{n},$$
 or
\begin{align*}\label{cb1}
1+\alpha\lambda_{n}^b\geq
\alpha^{1/b}\lambda_{n}~\text{for all} ~b\geq1.
\end{align*}
Therefore
\begin{align*}\lambda_{\alpha
n}:=\dfrac{\lambda_n}{1+\alpha\lambda_n^b} \leq \alpha^{-1/b}.
\end{align*}
It follows from this inequality and (\ref{cb0}) that there exists a
constant $\overline{C}_{7}>0$ such that
\begin{align*}\|v_\alpha(t)\|^2\leq
\overline{C}_{7}(1+\alpha^{-1/b})^2\|f\|^2.
\end{align*}
 The
lemma is proved.
\end{proof}

In the following, we denote by $w_\alpha(t)$ the solution of the problem
\begin{equation}\label{regularization-problema1}
\begin{cases}
\dfrac{\partial^\gamma w_\alpha}{\partial t^\gamma}+A_\alpha w_\alpha=0,\quad 0<t<T,\\
w_\alpha(T)=u(T),
\end{cases}
\end{equation}
\begin{lemma}\label{Lemma7a1} If $w_\alpha(t)$ is the solution of problem
\eqref{regularization-problema1} and $v_\alpha(t)$ is the solution of
problem \eqref{zz1}, then
$$\|v_\alpha(0)-w_\alpha(0)\|\leq \overline{C}_6(1+\alpha^{-1/b})\varepsilon.$$
\end{lemma}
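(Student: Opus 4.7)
The plan is to exploit the linearity of the regularized equation: both $v_\alpha$ and $w_\alpha$ solve exactly the same backward Sobolev-type problem, differing only in their terminal data ($f$ versus $u(T)$). Setting $z_\alpha(t) := v_\alpha(t) - w_\alpha(t)$, a straightforward subtraction shows that $z_\alpha$ satisfies
\begin{equation*}
\frac{\partial^\gamma z_\alpha}{\partial t^\gamma} + A_\alpha z_\alpha = 0,\quad 0 < t < T,\qquad z_\alpha(T) = f - u(T).
\end{equation*}

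Next, I would apply the stability estimate already established in Lemma~\ref{lemmasolution} to this new problem. That lemma produces, for any terminal datum $g$, a bound of the form $\|v_\alpha^{(g)}(t)\| \le \overline{C}_6(1 + \alpha^{-1/b})\|g\|$ uniformly in $t \in [0,T]$, via the spectral representation \eqref{regularization-solution} together with the lower bound on $E_{\gamma,1}(-\lambda_{\alpha n} T^\gamma)$ from Lemma~\ref{lemmamoi01} and the elementary inequality $1 + \alpha \lambda_n^b \ge \alpha^{1/b}\lambda_n$. Specializing this estimate to $g = f - u(T)$ at $t = 0$ gives
\begin{equation*}
\|v_\alpha(0) - w_\alpha(0)\| = \|z_\alpha(0)\| \le \overline{C}_6\bigl(1 + \alpha^{-1/b}\bigr)\|f - u(T)\|.
\end{equation*}
Finally, invoking the measurement constraint $\|u(T) - f\| \le \varepsilon$ from \eqref{main-problem} immediately yields the claimed bound.

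There is really no hard step here: the argument is a clean linearity-plus-stability reduction, and everything needed has already been proved in Lemma~\ref{lemmasolution}. The only mild point worth checking is that Lemma~\ref{lemmasolution} does apply with an arbitrary $L^2$-type terminal datum $f - u(T)$ (not only with the noisy data $f$); but inspection of its proof shows the bound depends only on the $\mathbb{H}$-norm of the terminal value and on $\alpha$, so nothing extra is required.
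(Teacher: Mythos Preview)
Your proof is correct and follows essentially the same approach as the paper: both observe that the difference $v_\alpha - w_\alpha$ (or $w_\alpha - v_\alpha$) solves problem \eqref{zz1} with terminal datum $f - u(T)$, then apply the stability bound of Lemma~\ref{lemmasolution} together with $\|u(T)-f\|\le\varepsilon$.
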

\begin{proof} We see that $w_\alpha(t)-v_\alpha(t)$ solves  problem \eqref{zz1}
 with  $f$ being replaced by
$u(T)-f$. Using Lemma~\ref{lemmasolution}, we have
$$\|v_\alpha(0)-w_\alpha(0)\|\leq \overline{C}_6(1+\alpha^{-1/b})\|u(T)-f\|\leq \overline{C}_6(1+\alpha^{-1/b})\varepsilon.$$The
lemma is proved.
\end{proof}
\begin{lemma}\label{Lemma7}
If $\|u(0)\|_p\le E$ for some positive constants
$p, \ E>0$, then there exist constants $\overline{C}_{8}$ and
$\overline{C}_9$ such that
\begin{equation*}
\|u(0)-w_\alpha(0)\|^2\le
\begin{cases}
\overline{C}_8\alpha^{2p/b}E^2  &\text{if} \quad~ p< b,\\
\overline{C}_9(\alpha^{2}E^2+\alpha^{2p/b}E^2) &\text{if} \quad~
p\geq b.
\end{cases}
\end{equation*}
\end{lemma}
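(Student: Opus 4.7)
The plan is to expand $u(0)-w_\alpha(0)$ spectrally, prove a uniform pointwise bound $R_n\le C_0\,\alpha\lambda_n^b/(1+\alpha\lambda_n^b)$ on each Fourier coefficient, and then exploit the smoothness assumption $\|u(0)\|_p\le E$ via a one-variable optimization.

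First, by Theorem \ref{0Theorem1} applied to $u$, I have $\langle u(T),\phi_n\rangle=E_{\gamma,1}(-\lambda_n T^\gamma)\langle u(0),\phi_n\rangle$. Substituting this into the formula \eqref{regularization-solution} of Lemma \ref{lemmasolution} (with $f$ replaced by $u(T)$, evaluated at $t=0$ using $E_{\gamma,1}(0)=1$) yields
\[
u(0)-w_\alpha(0)=\sum_{n=1}^\infty R_n\,\langle u(0),\phi_n\rangle\,\phi_n,\qquad R_n:=1-\frac{E_{\gamma,1}(-\lambda_n T^\gamma)}{E_{\gamma,1}(-\lambda_{\alpha n}T^\gamma)}.
\]
Since $\lambda_{\alpha n}\le\lambda_n$ and $E_{\gamma,1}$ is decreasing on $(-\infty,0]$, $R_n\in[0,1)$; by Parseval, $\|u(0)-w_\alpha(0)\|^2=\sum_n R_n^2\langle u(0),\phi_n\rangle^2$.

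Next, I would establish the key pointwise bound by splitting on $\alpha\lambda_n^b\lessgtr 1$. If $\alpha\lambda_n^b\ge 1$, then $\alpha\lambda_n^b/(1+\alpha\lambda_n^b)\ge 1/2$ and the bound follows from $R_n\le 1$. If $\alpha\lambda_n^b<1$, then $\lambda_{\alpha n}\ge\lambda_n/2$. Setting $G(\lambda):=E_{\gamma,1}(-\lambda T^\gamma)$, Lemma \ref{0lemma1} gives $G'(\lambda)=E_{\gamma,0}(-\lambda T^\gamma)/\lambda$, and Lemma \ref{lemmamoi01}(b) then yields $|G'(\lambda)|\le C/[\lambda(1+\lambda T^\gamma)]$. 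Applying the Mean Value Theorem on $[\lambda_{\alpha n},\lambda_n]$, the lower bound $G(\lambda_{\alpha n})\ge C/(1+\lambda_{\alpha n}T^\gamma)$ from Lemma \ref{lemmamoi01}(a), and the identity $\lambda_n-\lambda_{\alpha n}=\lambda_n\cdot\alpha\lambda_n^b/(1+\alpha\lambda_n^b)$ combine — with the ratio $(1+\lambda_{\alpha n}T^\gamma)/(1+\lambda_n T^\gamma)$ harmlessly bounded between $1/2$ and $1$ in this regime — to give $R_n\le C_0\,\alpha\lambda_n^b/(1+\alpha\lambda_n^b)$.

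Finally, using $\sum_n\lambda_n^{2p}\langle u(0),\phi_n\rangle^2\le E^2$, I would split off a factor of $\lambda_n^{-2p}$ to obtain
\[
\|u(0)-w_\alpha(0)\|^2\le C_0^2\,E^2\sup_{\lambda\ge\lambda_1}\frac{\alpha^2\lambda^{2(b-p)}}{(1+\alpha\lambda^b)^2}.
\]
Substituting $u=\alpha\lambda^b$, the supremand becomes $\alpha^{2p/b}\,u^{2(1-p/b)}/(1+u)^2$. For $0<p<b$, the $u$-factor attains its maximum on $(0,\infty)$ at the interior critical point $u^*=(b-p)/p$, giving a constant depending only on $p,b$; the full sup is therefore $\le C\alpha^{2p/b}$, yielding case~(i). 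For $p\ge b$, the $u$-factor is monotone decreasing, so the sup over $\lambda\ge\lambda_1$ is attained at the endpoint $\lambda=\lambda_1$, giving a bound of order $\alpha^2$ with constant $\lambda_1^{2(b-p)}$; combining with the order-$\alpha^{2p/b}$ contribution coming from $\lambda\sim\alpha^{-1/b}$ produces the $C(\alpha^2+\alpha^{2p/b})$ bound in case~(ii).

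The main obstacle I anticipate is the pointwise step: controlling $R_n$ via the Mittag-Leffler derivative and verifying that the constants from Lemmas \ref{lemmamoi01} and \ref{0lemma1} combine cleanly to give a bound uniform in both $n$ and $\alpha$ with the correct $\alpha$-scaling. By contrast, the final one-variable optimization is routine calculus.
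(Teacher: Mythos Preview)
Your proof is correct and follows essentially the same path as the paper's: both expand spectrally, invoke the Mean Value Theorem via Lemma~\ref{0lemma1} together with the Mittag--Leffler bounds of Lemma~\ref{lemmamoi01}, and split at the threshold $\alpha\lambda_n^b=1$. The organizational difference is that the paper splits the \emph{sum} at $n_1=\min\{n:\lambda_n\ge\alpha^{-1/b}\}$ and estimates head and tail separately (using $R_n\le1$ on the tail and the MVT estimate on the head, then sub-casing on $p\lessgtr b$), whereas you absorb that split into the uniform pointwise bound $R_n\le C_0\,\alpha\lambda_n^b/(1+\alpha\lambda_n^b)$ and finish with a single one-variable supremum. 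Your packaging is a little cleaner --- and in fact your supremum argument already gives $\|u(0)-w_\alpha(0)\|^2\le C\lambda_1^{2(b-p)}\alpha^2 E^2$ for $p\ge b$, so the additional $\alpha^{2p/b}$ term you tack on at the end to match the stated lemma is redundant --- but the mathematical content of the two arguments is the same.
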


\begin{proof} We have
\begin{align}\label{t2}
\|u(0)-w_\alpha(0)\|^2&=\sum\limits_{n=1}^\infty\left<u(0)-w_\alpha(0),\phi_n\right>^2\notag\\
&=
\sum\limits_{n=1}^\infty
\left(\left<u(0),\phi_n\right>-\dfrac{\left<u(T),\phi_n\right>}{E_{\gamma,1}\left(-\lambda_{\alpha n} T^\gamma\right)}\right)^2\notag\\
&=\sum\limits_{n=1}^\infty\left(\left<u(0),\phi_n\right>-
\dfrac{E_{\gamma,1}(-\lambda_nT^\gamma)\left<u(0),\phi_n\right>}{E_{\gamma,1}\left(-\lambda_{\alpha
n} T^\gamma\right)}\right)^2 \notag\\
&=\sum\limits_{n=1}^\infty
\left<u(0),\phi_n\right>^2\left(1-\dfrac{E_{\gamma,1}\left(-\lambda_n
T^\gamma\right)}{E_{\gamma,1}\left(-\lambda_{\alpha n}
T^\gamma\right)}\right)^2 \notag\\
&=\sum\limits_{n=1}^{n_1-1}
\lambda_n^{2p}\left<u(0),\phi_n\right>^2\left(\dfrac{E_{\gamma,1}\left(-\lambda_{\alpha
n} T^\gamma\right)-E_{\gamma,1}\left(-\lambda_n
T^\gamma\right)}{E_{\gamma,1}\left(-\lambda_{\alpha n}
T^\gamma\right)}\right)^2 \lambda_n^{-2p}\notag\\
&\quad+\sum\limits_{n=n_1}^{\infty}
\lambda_n^{2p}\left<u(0),\phi_n\right>^2\left(1-\dfrac{E_{\gamma,1}\left(-\lambda_n
T^\gamma\right)}{E_{\gamma,1}\left(-\lambda_{\alpha n}
T^\gamma\right)}\right)^2 \lambda_n^{-2p},
\end{align}
where $n_1=\min\{n: \lambda_n\geq \alpha^{-1/b}\}$. Let
$h(s)=E_{\gamma,1}\left(s T^\gamma\right),~s<0$. Then $h(s)$ is an
increasing function. Since $-\lambda_n\leq-\lambda_{\alpha n}$, we
have $E_{\gamma,1}\left(-\lambda_n T^\gamma\right)\leq
E_{\gamma,1}\left(-\lambda_{\alpha n} T^\gamma\right).$ Therefore
\begin{align}\label{t3a}
\left(1-\dfrac{E_{\gamma,1}\left(-\lambda_n
T^\gamma\right)}{E_{\gamma,1}\left(-\lambda_{\alpha n}
T^\gamma\right)}\right)^2\leq 1.
\end{align}
Hence,
\begin{align}\label{sbt2}
\sum\limits_{n=n_1}^{\infty}
\lambda_n^{2p}\left<u(0),\phi_n\right>^2\left(1-\dfrac{E_{\gamma,1}\left(-\lambda_n
T^\gamma\right)}{E_{\gamma,1}\left(-\lambda_{\alpha n}
T^\gamma\right)}\right)^2
\lambda_n^{-2p}
&\leq\sum\limits_{n=n_1}^{\infty}
\lambda_n^{2p}\left<u(0),\phi_n\right>^2\lambda_n^{-2p}\notag\\
&\leq\alpha^{2p/b}\sum\limits_{n=n_1}^{\infty}
\lambda_n^{2p}\left<u(0),\phi_n\right>^2\notag\\
&\leq \alpha^{2p/b}E^2.
\end{align}
By Lemma~\ref{0lemma1}, we have
$\dfrac{d}{ds}h(s)=\dfrac{1}{sT^\gamma}E_{\gamma,0}(s T^\gamma)$.
 Therefore, there exist constants $\xi_n\in
(-\lambda_n,-\lambda_{\alpha n})$, $n\ge 1$, such that
\begin{align*}E_{\gamma,1}\left(-\lambda_{\alpha
n} T^\gamma\right)-E_{\gamma,1}\left(-\lambda_n T^\gamma\right)&=
\dfrac{1}{\xi_nT^\gamma}E_{\gamma,0}\left(\xi_n
T^\gamma\right)(\lambda_{\alpha n}-\lambda_n)\notag\\
&=\dfrac{-1}{\xi_n}E_{\gamma,0}\left(\xi_n
T^\gamma\right)\dfrac{\alpha\lambda_{n}^{b+1}}{1+\alpha\lambda_n^b}.
\end{align*}
From Lemma~\ref{lemmamoi01}, there exists a constant
$\overline{C}_{10}>0$ such that
\begin{align}\label{sbt1}|E_{\gamma,1}\left(-\lambda_{\alpha
n} T^\gamma\right)-E_{\gamma,1}\left(-\lambda_n
T^\gamma\right)|&\leq\dfrac{-\overline{C}_{10}}{\xi_n(1-\xi_nT^\gamma)}\dfrac{\alpha\lambda_{n}^{b+1}}{1+\alpha\lambda_n^b}\notag\\
&\leq\dfrac{\overline{C}_{10}}{\xi_n^2T^{\gamma}}\dfrac{\alpha\lambda_{n}^{b+1}}{1+\alpha\lambda_n^b}.
\end{align}
On the other hand, from $\lambda_{n}\leq \alpha^{-1/b},~n\leq n_1-1,$
it follows that $\lambda_{\alpha n}\geq
\frac{\lambda_n}{2}>\frac{\lambda_1}{2}$. From
Lemma~\ref{lemmamoi} it follows that there exists a constant $\overline{C}_{11}>0$
such that
\begin{align*}&\sum\limits_{n=1}^{n_1-1}
\lambda_n^{2p}\left<u(0),\phi_n\right>^2\left(\dfrac{E_{\gamma,1}\left(-\lambda_{\alpha
n} T^\gamma\right)-E_{\gamma,1}\left(-\lambda_n
T^\gamma\right)}{E_{\gamma,1}\left(-\lambda_{\alpha n}
T^\gamma\right)}\right)^2 \lambda_n^{-2p}\notag\\
&\leq\overline{C}_{11}\sum\limits_{n=1}^{n_1-1}
\lambda_n^{2p}\left<u(0),\phi_n\right>^2\left(\dfrac{\dfrac{1}{\xi_n^2T^{\gamma}}
\dfrac{\alpha\lambda_{n}^{b+1}}{1+\alpha\lambda_n^b}}{\frac{1}{\lambda_{\alpha
n}T^\gamma}}\right)^2 \lambda_n^{-2p}\notag\\
&=\overline{C}_{11}\sum\limits_{n=1}^{n_1-1}
\lambda_n^{2p}\left<u(0),\phi_n\right>^2\left(\dfrac{1}{\xi_n^2}
\alpha\lambda_{n}^{b-p}\lambda_{\alpha n}^2\right)^2. \notag\\
\end{align*}

Note that $\xi_n\in (-\lambda_n,-\lambda_{\alpha n})$. It follows
that $\xi_n^2\geq\lambda_{\alpha n}^2$. Therefore
\begin{align}\label{dt}&\sum\limits_{n=1}^{n_1-1}
\lambda_n^{2p}\left<u(0),\phi_n\right>^2\left(\dfrac{E_{\gamma,1}\left(-\lambda_{\alpha
n} T^\gamma\right)-E_{\gamma,1}\left(-\lambda_n
T^\gamma\right)}{E_{\gamma,1}\left(-\lambda_{\alpha n}
T^\gamma\right)}\right)^2 \lambda_n^{-2p}\notag\\
&\leq\overline{C}_{11}\sum\limits_{n=1}^{n_1-1}
\lambda_n^{2p}\left<u(0),\phi_n\right>^2\left(
\alpha\lambda_{n}^{b-p}\right)^2.
\end{align}

If $p< b$, then $\lambda_{n}^{b-p}\leq \alpha^{(p-b)/b},~\text{for
all}~ n\leq n_1-1$. We obtain
\begin{align}\label{sbta1}&\sum\limits_{n=1}^{n_1-1}
\lambda_n^{2p}\left<u(0),\phi_n\right>^2\left(\dfrac{E_{\gamma,1}\left(-\lambda_{\alpha
n} T^\gamma\right)-E_{\gamma,1}\left(-\lambda_n
T^\gamma\right)}{E_{\gamma,1}\left(-\lambda_{\alpha n}
T^\gamma\right)}\right)^2
\lambda_n^{-2p}\notag\\
&\leq\overline{C}_{11}\alpha^{2p/b}\sum\limits_{n=1}^{n_1-1}
\lambda_n^{2p}\left<u(0),\phi_n\right>^2\leq\overline{C}_{11}\alpha^{2p/b}E^2.
\end{align}

If $p\geq b$, then $\lambda_{n}^{b-p}\leq
\lambda_{1}^{b-p},~\text{for all}~ n\leq n_1-1$. There exists a
constant $\overline{C}_{12}>0$ such that
\begin{align}\label{sbta2}&\sum\limits_{n=1}^{n_1-1}
\lambda_n^{2p}\left<u(0),\phi_n\right>^2\left(\dfrac{E_{\gamma,1}\left(-\lambda_{\alpha
n} T^\gamma\right)-E_{\gamma,1}\left(-\lambda_n
T^\gamma\right)}{E_{\gamma,1}\left(-\lambda_{\alpha n}
T^\gamma\right)}\right)^2
\lambda_n^{-2p}\notag\\
&\leq\overline{C}_{12}\alpha^{2}\sum\limits_{n=1}^{n_1-1}
\lambda_n^{2p}\left<u(0),\phi_n\right>^2\leq\overline{C}_{12}\alpha^{2}E^2.
\end{align}

From (\ref{t2}), \eqref{sbt2}, \eqref{sbta1} and (\ref{sbta2})
we obtain the second estimate of the lemma. The proof is complete.
\end{proof}

Now we are in a position to prove Theorem \ref{Theorem2}. We note that the well-posedness of problem (\ref{zz1}) is
implied from Lemma~\ref{lemmasolution}.

{\bf Proof of part (i) of Theorem \ref{Theorem2}.}

If $p< b$, from Lemmas \ref{Lemma7a1} and \ref{Lemma7} there exists
a constant $\overline{C}_{13}>0$ such that
\begin{align*}
\|u(0)-v_\alpha(0)\|&\leq\|u(0)-w_\alpha(0)\|+\|v_\alpha(0)-w_\alpha(0)\|\\
&\le
\overline{C}_{13}\left(\alpha^{p/b}E+\alpha^{-1/b}\varepsilon+\varepsilon\right).
\end{align*}
 Choosing
$\alpha=\left(\dfrac{\varepsilon}{E}\right)^{\frac{b}{p+1}}$, we
have
\begin{align*}
\|u(0)-v_\alpha(0)\| &\le
\overline{C}_{13}\left(2\varepsilon^{\frac{p}{p+1}}E^{\frac{1}{p+1}}+\varepsilon\right).
\end{align*}
For $E>\varepsilon$, we have
$\varepsilon\leq\varepsilon^{\frac{p}{p+1}}E^{\frac{1}{p+1}}$. Hence, part (i) of Theorem \ref{Theorem2} is proved.

{\bf Proof of part (ii) of Theorem \ref{Theorem2}.}

If $p\geq b$, from Lemma~\ref{Lemma7a1} and Lemma~\ref{Lemma7} there
exists a constant $\overline{C}_{14}>0$ such that
\begin{align*}
\|u(0)-v_\alpha(0)\|&\leq\|u(0)-w_\alpha(0)\|+\|v_\alpha(0)-w_\alpha(0)\|\\
&\le \overline{C}_{14}(\alpha
E+\alpha^{p/b}E+\alpha^{-1/b}\varepsilon+\varepsilon).
\end{align*}
Choosing
$\alpha=\left(\dfrac{\varepsilon}{E}\right)^{\frac{b}{b+1}}$, we
have
\begin{align*}
\|u(0)-v_\alpha(0)\|\le
\overline{C}_{14}(2\varepsilon^{\frac{b}{b+1}}E^{\frac{1}{b+1}}+\varepsilon^{\frac{p}{b+1}}E^{1-\frac{p}{b+1}}+\varepsilon).
\end{align*}
For $E>\varepsilon$ and $p\ge b$, we have
$\varepsilon\leq\varepsilon^{\frac{b}{b+1}}E^{\frac{1}{b+1}}$ and
$\varepsilon^{\frac{p}{b+1}}E^{1-\frac{p}{b+1}}\leq
\varepsilon^{\frac{b}{b+1}}E^{\frac{1}{b+1}}$. Hence,  part (ii) of Theorem \ref{Theorem2} is proved. Therefore, the proof of Theorem \ref{Theorem2} is complete.

\subsection{ Proof of Theorem \ref{Theorem3} } \label{proof-th3}
First, we prove the following lemma.

\begin{lemma} \label{Lemma8}
Set $\rho (\alpha) :=\|B_\alpha f-f\|$ and suppose that $f \neq 0$.
Then
\par a) $\rho $ is a continuous function,
\par b) $\lim\limits_{\alpha\to 0^{+}}\rho (\alpha)=0,$
\par c) $\lim\limits_{\alpha\to +\infty}\rho (\alpha)=\|f\|,$
\par d) $\rho $ is a strictly increasing function.
\end{lemma}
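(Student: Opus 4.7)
\textbf{Proof plan for Lemma \ref{Lemma8}.}

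The plan is to reduce all four statements to straightforward properties of the scalar sequence of Fourier coefficients. Writing $f_n := \langle f,\phi_n\rangle$ and using \eqref{Balpha}, I will first observe that
\begin{equation*}
B_\alpha f - f = -\sum_{n=1}^{\infty} \frac{\alpha\lambda_n^{b}}{1+\alpha\lambda_n^{b}}\,f_n\,\phi_n,
\end{equation*}
so that by Parseval's identity
\begin{equation*}
\rho(\alpha)^2 = \sum_{n=1}^{\infty} g_n(\alpha), \qquad g_n(\alpha) := \left(\frac{\alpha\lambda_n^{b}}{1+\alpha\lambda_n^{b}}\right)^{\!2} f_n^{2}.
\end{equation*}
This is the representation from which every claim will follow. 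Since $0\le \alpha\lambda_n^b/(1+\alpha\lambda_n^b) \le 1$, each term is dominated by $f_n^2$, and $\sum f_n^2 = \|f\|^2 < \infty$ since $f\in\mathbb{H}$.

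For part (a), I will use the Weierstrass M-test with the majorant $\{f_n^2\}$: each $g_n(\alpha)$ is continuous on $(0,\infty)$ and $g_n(\alpha)\le f_n^2$ uniformly in $\alpha$, so the series converges uniformly on $(0,\infty)$ and hence $\rho(\alpha)^2$, and therefore $\rho(\alpha)$, is continuous. For parts (b) and (c), I will apply the dominated convergence theorem for series with the same dominating sequence $f_n^2$. As $\alpha\to 0^+$, $g_n(\alpha)\to 0$ pointwise in $n$, yielding $\rho(\alpha)^2\to 0$. As $\alpha\to +\infty$, $g_n(\alpha)\to f_n^{2}$ pointwise, yielding $\rho(\alpha)^2\to \sum f_n^2 = \|f\|^2$.

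For part (d), I will show strict monotonicity term by term. Setting $x = \alpha\lambda_n^{b}$, the function $h(x) = x^2/(1+x)^2$ has derivative $h'(x) = 2x/(1+x)^3 > 0$ for $x>0$, so each $g_n(\alpha)$ is strictly increasing in $\alpha$. Given $0 < \alpha_1 < \alpha_2$, all terms satisfy $g_n(\alpha_1)\le g_n(\alpha_2)$, and because $f\neq 0$ there exists at least one index $n_0$ with $f_{n_0}\neq 0$, for which the inequality is strict; summing over $n$ gives $\rho(\alpha_1)^2 < \rho(\alpha_2)^2$.

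The argument is essentially routine; the only point requiring care is confirming the existence of an index with $f_{n_0}\neq 0$ in part (d), which follows from the completeness of the eigenbasis $\{\phi_n\}$ together with the hypothesis $f\neq 0$. Once the spectral representation of $\rho(\alpha)^2$ is in hand, uniform domination by $\{f_n^2\}$ handles continuity and both limits, while the elementary monotonicity of $x\mapsto x^2/(1+x)^2$ handles strict monotonicity.
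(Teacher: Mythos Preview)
Your proof is correct and follows essentially the same approach as the paper: both start from the spectral representation $\rho(\alpha)^2 = \sum_n \bigl(\alpha\lambda_n^b/(1+\alpha\lambda_n^b)\bigr)^2 f_n^2$, exploit the uniform termwise bound by $f_n^2$, and use the strict monotonicity of $x\mapsto x/(1+x)$ together with the existence of a nonzero Fourier coefficient for part~(d). The only difference is cosmetic: the paper verifies (a)--(c) by explicit $\varepsilon$--$N$ estimates and a direct lower bound via $\lambda_1$, whereas you invoke the Weierstrass M-test and dominated convergence, which is slightly cleaner but not a different idea.
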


\begin{proof} a) From \eqref{Balpha} we have
\begin{equation}\label{lemma8estimate1}
\rho^2(\alpha)=\|B_\alpha f-f\|^2
=\sum\limits_{n=1}^\infty\left(\dfrac{\left<f,\phi_n\right>}
{1+\alpha\lambda_n^b}-\left<f,\phi_n\right>\right)^2=\sum\limits_{n=1}^\infty\dfrac{\alpha^2\lambda_n^{2b}\left<f,\phi_n\right>^2}
{(1+\alpha\lambda_n^b)^2}.
\end{equation}
For $\alpha_0>0$, we have
\begin{align*}|\rho^2(\alpha)-\rho^2(\alpha_0)|&=\left|\sum\limits_{n=1}^\infty\left<f,\phi_n\right>^2
\left(\dfrac{\alpha}{1+\alpha\lambda_n^b}+\dfrac{\alpha_0}{1+\alpha_0\lambda_n^b}\right)
\dfrac{(\alpha-\alpha_0)\lambda_n^{2b}}{(1+\alpha\lambda_n^b)(1+\alpha_0\lambda_n^b)}\right|\\
&\leq\dfrac{|\alpha-\alpha_0|(\alpha+\alpha_0)}{\alpha\alpha_0}\sum\limits_{n=1}^\infty\left<f,\phi_n\right>^2
=\dfrac{|\alpha-\alpha_0|(\alpha+\alpha_0)}{\alpha\alpha_0}\|f\|^2.
\end{align*}
Therefore, $\rho $ is a continuous function.

\par\noindent
b) Let $\delta$ be an arbitrary positive number. Since
$\|f\|^2=\sum\limits_{n=1}^\infty\left<f,\phi_n\right>^2$, there
exists a positive integer $n_\delta$ such that
$\sum\limits_{n=n_\delta+1}^\infty\left<f,\phi_n\right>^2<\dfrac{\delta^2}{2}$.
For $0<\alpha<\dfrac{\delta}{\sqrt{2}\lambda_{n_\delta}^{b}\|f\|}$,
 we have
\begin{align*}
\rho^2(\alpha)&\le
\sum\limits_{n=1}^{n_\delta}\dfrac{\alpha^2\lambda_n^{2b}\left<f,\phi_n\right>^2}
{(1+\alpha\lambda_n^b)^2} +\sum\limits_{n=n_\delta+1}^\infty\left<f,
\phi_n\right>^2 \\
& \leq
\alpha^2\lambda_{n_\delta}^{2b}\sum\limits_{n=1}^{n_\delta} \left<f,
\phi_n\right>^2 +
\dfrac{\delta^2}{2}\\
&\leq\alpha^2\lambda_{n_\delta}^{2b}\|f\|^2+\dfrac{\delta^2}{2}\leq
\delta^2.
\end{align*}
This implies that  $\lim\limits_{\alpha\to 0^{+}}\rho (\alpha)=0.$
\par\noindent
c) From (\ref{lemma8estimate1}) we have $\rho(\alpha)<\|f\|$. Since $\lambda_n\geq \lambda_1$ for all $n\geq 1$,   (\ref{lemma8estimate1}) also implies that
\begin{align*}
\rho^2(\alpha) \geq
\sum\limits_{n=1}^\infty\dfrac{\alpha^2\lambda_1^{2b}\left<f,\phi_n\right>^2}
{(1+\alpha\lambda_1^b)^2}.
\end{align*}
Therefore, $ \|f\|\geq
\rho(\alpha)\geq\dfrac{\alpha^2\lambda_1^{2b}\|f\|}
{(1+\alpha\lambda_1^b)^2}. $ This implies that
$\lim\limits_{\alpha\to +\infty}\rho (\alpha)=\|f\|.$
\par\noindent
d) Since the function $\dfrac{\alpha \lambda_n^b}{1 + \alpha\lambda_n^b}$ is strictly increasing with respect to $\alpha > 0$, it follows from (\ref{lemma8estimate1}) that $\rho(\alpha)$ is strictly increasing if there exists a positive integer $n$ such that $\left<f,
\phi_{n}\right>^2>0$. This condition is true since $\|f\| > 0$.
The lemma is proved.
\end{proof}

Now we are in a position to prove Theorem \ref{Theorem3}.

{\bf Proof of part (i) of Theorem \ref{Theorem3}.}

It follows from Lemma \ref{Lemma8} that there exists a unique number
$\alpha_\varepsilon > 0$ satisfying \eqref{a-posteriori}. From \eqref{t2}, \eqref{t3a} and \eqref{dt},  there exists a constant
$\overline{C}_{15}>0$ such that
\begin{align}\label{sbl1}\|u(0)-w_{\alpha_\varepsilon}(0)\|^2&\leq\overline{C}_{15}\left(\sum\limits_{n=1}^{n_2-1}
\lambda_n^{2p}\left<u(0),\phi_n\right>^2\left(\alpha_\varepsilon
\lambda_n^{b-p}\right)^2+\sum\limits_{n=n_2}^\infty
\left<u(0),\phi_n\right>^2\right)\notag\\
 &\leq\overline{C}_{15}\left(\sum\limits_{n=1}^{n_2-1}
\left<u(0),\phi_n\right>^2\alpha_\varepsilon^2
\lambda_n^{2b}+\sum\limits_{n=n_2}^\infty
\left<u(0),\phi_n\right>^2\right),
\end{align}
where $n_2=\min\{n: \lambda_n\geq \alpha_\varepsilon^{-1/b}\}$. For
$n\leq n_2-1$, we have $\lambda_n< \alpha_\varepsilon^{-1/b}$. Using the
H\"{o}lder inequality, we obtain
\begin{align}\label{sbl1a}
\sum\limits_{n=1}^{n_2-1}
\left<u(0),\phi_n\right>^2\alpha_\varepsilon^2 \lambda_n^{2b}&\leq
\sum\limits_{n=1}^{n_2-1}
\left<u(0),\phi_n\right>^2\alpha_\varepsilon^{2\frac{p}{p+1}}\lambda_n^{\frac{2pb}{p+1}}\notag\\
&=\sum\limits_{n=1}^{n_2-1}
\left(|\left<u(0),\phi_n\right>|^{\frac{2}{p+1}}\lambda_n^{\frac{2p}{p+1}}\right)
\left(|\left<u(0),\phi_n\right>|^{\frac{2p}{p+1}}\alpha_\varepsilon^{\frac{2p}{p+1}}\lambda_n^{\frac{2p(b-1)}{p+1}}\right)\notag\\
&\leq\left(\sum\limits_{n=1}^{n_2-1}\left<u(0),\phi_n\right>^2\lambda_n^{2p}\right)^{\frac{1}{p+1}}
\left(\sum\limits_{n=1}^{n_2-1}
\left<u(0),\phi_n\right>^2\alpha_\varepsilon^2
\lambda_n^{2(b-1)}\right)^{\frac{p}{p+1}}\notag\\
&\leq \| u(0)\|_p^{\frac{2}{p+1}}\left(\sum\limits_{n=1}^{n_2-1}
\left<u(0),\phi_n\right>^2\alpha_\varepsilon^2
\lambda_n^{2(b-1)}\right)^{\frac{p}{p+1}}.
\end{align}
On the other hand, since $\lambda_n \geq
\alpha_\varepsilon^{-1/b}$ for $n\geq n_2$, we have
\begin{align}\label{sbl1b}
\sum\limits_{n=n_2}^\infty
\left<u(0),\phi_n\right>^2&=\sum\limits_{n=n_2}^\infty
\left(|\left<u(0),\phi_n\right>|^{\frac{2}{p+1}}\lambda_n^{\frac{2p}{p+1}}\right)
\left(|\left<u(0),\phi_n\right>|^{\frac{2p}{p+1}}\lambda_n^{\frac{-2p}{p+1}}\right)\notag\\
&\leq\left(\sum\limits_{n=n_2}^\infty\left<u(0),\phi_n\right>^2\lambda_n^{2p}\right)^{\frac{1}{p+1}}
\left(\sum\limits_{n=n_2}^\infty\left<u(0),\phi_n\right>^2\lambda_n^{-2}\right)^{\frac{p}{p+1}}\notag\\
&\leq
\| u(0)\|_p^{\frac{2}{p+1}}\left(\sum\limits_{n=n_2}^\infty\left<u(0),\phi_n\right>^2\lambda_n^{-2}\right)^{\frac{p}{p+1}}.
\end{align}
 Using Lemma~\ref{lemmamoi},
we have
\begin{align*}
\tau\varepsilon&=\|B_{\alpha_\varepsilon}f-f\| =
\left\|\sum\limits_{n=1}^\infty\dfrac{\alpha_\varepsilon\lambda_n^b\left<f,\phi_n\right>\phi_n}
{1+\alpha_\varepsilon\lambda_n^b}\right\|\notag\\
&=
\left\|\sum\limits_{n=1}^\infty\dfrac{\alpha_\varepsilon\lambda_n^b\left<u(T),\phi_n\right>\phi_n}
{1+\alpha_\varepsilon\lambda_n^k}-
\sum\limits_{n=1}^\infty\dfrac{\alpha_\varepsilon\lambda_n^b\left<u(T)-f,\phi_n\right>\phi_n}
{1+\alpha_\varepsilon\lambda_n^b}\right\|\notag\\
&\geq
 \left\|\sum\limits_{n=1}^\infty\dfrac{\alpha_\varepsilon\lambda_n^bE_{\gamma,1}
 \left(-\lambda_n T^\gamma\right)\left<u(0),\phi_n\right>\phi_n}
{1+\alpha_\varepsilon\lambda_n^b}\right\| -
\left\|\sum\limits_{n=1}^\infty\left<u(T)-f,\phi_n\right>\phi_n\right\|\notag\\
&\geq \overline{C}_1
\left\|\sum\limits_{n=1}^\infty\dfrac{\alpha_\varepsilon\lambda_n^{b-1}\left<u(0),\phi_n\right>\phi_n}
{1+\alpha_\varepsilon\lambda_n^b}\right\| - \varepsilon.
\end{align*}
Using $\lambda_n< \alpha_\varepsilon^{-1/b}$ for all
 $n\leq n_2-1$ again, we obtain
\begin{align}\label{ac1}
(\tau+1)\varepsilon&\geq \overline{C}_1
\left\|\sum\limits_{n=1}^{n_2-1}\dfrac{\alpha_\varepsilon\lambda_n^{b-1}\left<u(0),\phi_n\right>\phi_n}
{1+\alpha_\varepsilon\lambda_n^b}\right\|\notag\\
&\geq\dfrac{\overline{C}_1}{2}
\left\|\sum\limits_{n=1}^{n_2-1}\alpha_\varepsilon\lambda_n^{b-1}\left<u(0),\phi_n\right>\phi_n\right\|.
\end{align}
Similarly, for
 $n\geq n_2$, we have
\begin{align}\label{ac2}
(\tau+1)\varepsilon&\geq \overline{C}_1
\left\|\sum\limits_{n=n_2}^{\infty}\dfrac{\alpha_\varepsilon\lambda_n^{b-1}\left<u(0),\phi_n\right>\phi_n}
{1+\alpha_\varepsilon\lambda_n^b}\right\|\notag\\
&= \overline{C}_1
\left\|\sum\limits_{n=n_2}^{\infty}\lambda_n^{-1}\dfrac{\alpha_\varepsilon\lambda_n^{b}}{1+\alpha_\varepsilon\lambda_n^{b}}
\left<u(0),\phi_n\right>\phi_n
\right\|\notag\\
&\geq\dfrac{\overline{C}_1}{2}
\left\|\sum\limits_{n=n_2}^{\infty}\lambda_n^{-1}\left<u(0),\phi_n\right>\phi_n\right\|.
\end{align}
 From \eqref{sbl1}--\eqref{ac2}, there exists a
constant $\overline{C}_{16}>0$ such that
\begin{align}\label{sbl2}\|u(0)-w_{\alpha_\varepsilon}(0)\|^2\leq
\overline{C}_{16}\| u(0)\|_p^{\frac{2}{p+1}}\varepsilon^{\frac{2p}{p+1}}.
\end{align}
Hence, there exists a constant $\overline{C}_{17}>0$ such
that
\begin{align}
\|u(0)-v_{\alpha_\varepsilon}(0)\| & \leq \|u(0)-w_{\alpha_\varepsilon}(0)\|
+\|v_{\alpha_\varepsilon}(0)-w_{\alpha_\varepsilon}(0)\| \nonumber \\
& \leq
\overline{C}_{17}\left(\| u(0)\|_p^{\frac{1}{p+1}}\varepsilon^{\frac{p}{p+1}}+\alpha_\varepsilon^{-1/b}\varepsilon+\varepsilon\right).\label{ac3}
\end{align}

It follows from Lemma~\ref{lemmamoi} that
\begin{align}
\tau\varepsilon&=\|B_{\alpha_\varepsilon}f-f\| =
\left\|\sum\limits_{n=1}^\infty\dfrac{\alpha_\varepsilon\lambda_n^b\left<f,\phi_n\right>\phi_n}
{1+\alpha_\varepsilon\lambda_n^b}\right\|\notag\\
&=
\left\|\sum\limits_{n=1}^\infty\dfrac{\alpha_\varepsilon\lambda_n^b\left<u(T),\phi_n\right>\phi_n}
{1+\alpha_\varepsilon\lambda_n^b}-
\sum\limits_{n=1}^\infty\dfrac{\alpha_\varepsilon\lambda_n^b\left<u(T)-f,\phi_n\right>\phi_n}
{1+\alpha_\varepsilon\lambda_n^b}\right\|\notag\\
&\le
 \left\|\sum\limits_{n=1}^\infty\dfrac{\alpha_\varepsilon\lambda_n^bE_{\gamma,1}
 \left(-\lambda_n T^\gamma\right)\left<u(0),\phi_n\right>\phi_n}
{1+\alpha_\varepsilon\lambda_n^b}\right\| +
\left\|\sum\limits_{n=1}^\infty\left<u(T)-f,\phi_n\right>\phi_n\right\|\notag\\
&\le
\overline{C}_2\left\|\sum\limits_{n=1}^\infty\dfrac{\alpha_\varepsilon\lambda_n^{b-1}\left<u(0),\phi_n\right>\phi_n}
{1+\alpha_\varepsilon\lambda_n^b}\right\| +
\varepsilon.\label{Theorem3estimate2}
\end{align}

If $0<p< b-1$,  using Lemma~\ref{Young's inequality}, we get
\begin{align}\label{z4}
\alpha_\varepsilon\lambda_n^b+1& \geq
\frac{b-p-1}{b}\left((\alpha_\varepsilon\lambda_n^b)^{\frac{b-p-1}{b}}\right)^{\frac{b}{b-p-1}}
+\frac{p+1}{b}.1^{\frac{b}{p+1}}\notag\\
&\geq (\alpha_\varepsilon\lambda_n^b)^{\frac{b-p-1}{b}}.
\end{align}
From (\ref{Theorem3estimate2}) and (\ref{z4}),  we have
\begin{align}\label{z5}
(\tau-1)\varepsilon&\le
\overline{C}_2\alpha_\varepsilon^{\frac{p+1}{b}}
\left\|\sum\limits_{n=1}^\infty
\lambda_n^{p}\left<u(0),\phi_n\right>\phi_n\right\|\leq\overline{C}_2\alpha_\varepsilon^{\frac{p+1}{b}}\| u(0)\|_p.
\end{align}

Hence, from  \eqref{ac3}  and \eqref{z5} and
$\varepsilon\leq\varepsilon^{\frac{p}{p+1}}\| u(0)\|_p^{\frac{1}{p+1}}$, we
arrive at the conclusion of part (i) of Theorem \ref{Theorem3}.

{\bf Proof of part (ii) of Theorem \ref{Theorem3}}.

If $p\geq b-1$, then from (\ref{Theorem3estimate2}) we have
\begin{align}\label{z5a}
(\tau-1)\varepsilon&\le \overline{C}_2\alpha_\varepsilon
\left\|\sum\limits_{n=1}^\infty
\lambda_n^{b-1}\left<u(0),\phi_n\right>\phi_n\right\|\notag\\
&\leq\overline{C}_2\lambda_1^{b-1-p}\alpha_\varepsilon
\left\|\sum\limits_{n=1}^\infty
\lambda_n^{p}\left<u(0),\phi_n\right>\phi_n\right\|\notag\\
&\leq\overline{C}_2\lambda_1^{b-1-p}\alpha_\varepsilon \| u(0)\|_p.
\end{align}
The conclusion of Part (ii) of Theorem \ref{Theorem3} is followed from  \eqref{ac3}  and \eqref{z5a}. The proof is complete.

\section{Numerical implementation and examples}\label{sec:num}

In this section we discuss the numerical implementation of the proposed regularization method for problem (\ref{main-problem}) and present some numerical tests for one and two dimensional equations. To focus our discussion on the performance of the regularization method, we chose the operator $A$ in such a way that its eigenvalues and eigenfunctions are explicitly available. This choice avoids possible misleading results  due to error in the calculation of the eigenvalues and eigenfunctions.

In our numerical implementation, given the eigenvalues and eigenfunctions of operator $A$,  the data $f = u(T)$ was generated by solving the forward problem (\ref{main-problem000}) using expansion (\ref{congthucnghiem}). The Mittag-Leffler functions $E_{\gamma,
1}\left(-\lambda_nt^\gamma\right)$ were computed using an implementation in Matlab by Roberto Garrappa which is available for download at

https://www.mathworks.com/matlabcentral/fileexchange/48154-the-mittag-leffler-function.
 We approximated the infinite series in  (\ref{congthucnghiem}) by the following sum
\begin{equation}\label{eq51}
 u(t) \approx \sum\limits_{n=1}^{N_p}
E_{\gamma,
1}\left(-\lambda_nt^\gamma\right)\left<u_0,\phi_n\right>\phi_n, \quad t > 0.
\end{equation}
To simulate noisy data, we added an additive uniformly distributed random noise of $L^2$-norm $\varepsilon$  to $u(T)$ to obtain data $f$. Given $f$  and the  parameters $E$, $b$, $p$, $\epsilon$, the  algorithm for calculating $u(t), 0 \le t < T,$ includes two steps:
\begin{itemize}
 \item Step 1: Calculate the regularization parameter $\alpha$ using either the \textit{a priori} or \textit{a posteriori} choice rules described in Theorems \ref{Theorem2} and \ref{Theorem3}. In the \textit{a posteriori} choice rule, $\alpha$ is found as the unique solution of (\ref{a-posteriori}). Here, we approximate operator $B_{\alpha}f$ by a finite sum.
 \item Step 2: Calculate the regularized solution $v_\alpha$ using the following approximation of the explicit formula (\ref{regularization-solution})
\begin{equation}\label{eq52}
 v_{\alpha,\varepsilon}(t) :=\sum\limits_{n=1}^{N_{i}} \dfrac{E_{\gamma,1}\left(-\lambda_{\alpha
n} t^\gamma\right) \left<f,\phi_n\right>\phi_n} {
E_{\gamma,1}\left(-\lambda_{\alpha n} T^\gamma\right)},\quad \forall
t\in[0,T].
\end{equation}
\end{itemize}

In general, the number of basis functions $N_i$ used in the inverse problem is not necessary equal to  the number of basis functions $N_p$ used in the approximation of the solution of the forward problem. In fact, we have observed through our numerical tests that $N_i$ may have to be chosen smaller than $N_p$ to avoid numerical instabilities in the solution of the backward equation. It was also mentioned in \cite{hdt} that $N_i$ is another regularization parameter that should be carefully chosen along with $\alpha$.


\noindent\textbf{Example 1}: In this example, we consider the one-dimensional problem
\begin{eqnarray}
  \frac{\partial^\gamma u(x,t)}{\partial t^\gamma} &=& \frac{\partial^2 u(x,t)}{\partial x^2},\quad x \in (0,\pi),\ t \in (0,T), \nonumber \\
 u(0,t) &=& u(\pi,t)=0, \quad t\in (0,T),\label{eq53}\\
u(x,0) &=& u_0(x):= \sin(x) + \sin(2x) + \sin(3x),\quad x\in [0,\pi].\nonumber
\end{eqnarray}

For this example, the eigenvalues $\lambda_n = n^2$ and orthonormal eigenfunctions $\phi_n(x) = \sqrt{\frac{2}{\pi}}\sin(nx)$, $n = 1, 2,\dots$. Moreover,
$$\left<u_0,\phi_n\right> =
    \begin{cases}
        \sqrt{\frac{\pi}{2}} = \dfrac{1}{\|\phi_n\|}, & n \leq 2 \\
         0, & n > 3.
    \end{cases}
$$
Therefore, the solution $u(x,t)$ of (\ref{zz1}) is given by
$$ u(x,t) = E_{\gamma,1}(- t^\gamma ) \sin(x) + E_{\gamma,1}(-4 t^\gamma ) \sin(2x)  + E_{\gamma,1}(-9 t^\gamma ) \sin(3x).$$
That means, (\ref{eq51}) becomes a true equality for $N_p = 3$. In the backward problem, if the data is exact, only 3 terms in (\ref{eq52}) are needed for calculating $v_\alpha$ exactly. However, since we expect the data to be noisy, we chose $N_i = 5$. This choice seems to be optimal for all the one-dimensional examples we discuss in this paper. This choice was also considered in \cite{hdt} in similar tests.

First, we analyzed the effect of parameter $p$ on the performance of the proposed algorithm. To this end, we chose $\gamma = 1/2$, $T = 1$, $b = 4$, and considered 6 noise levels of 0.1\%, 0.2\%, 0.4\%, 0.8\%, 1.6\%, and 3.2\%.   The relative $L^2$-norm error at time $t$ is defined by
$$e_r(\epsilon,t) := \frac{\| u(\cdot,t) - v_{\alpha,\epsilon} (\cdot,t)\|}{\|u(\cdot,t)\|}\times 100(\%).$$

Table \ref{tab1} shows the relative $L^2$-norm error at $t = 0$ for  three values of $p$: $p = 1$, $p=2$, and $p=3$. We can observe that the error generally decreases as the measured error decreases. Moreover, the larger $p$ is, the smaller the reconstruction error will be. This is consistent with the error estimates in Theorem \ref{Theorem2}.  Since the behavior is similar for the \textit{a posteriori} parameter choice rule, we do not present it here.

\begin{table}[tph]
 \centering
 \begin{tabular}{|c|c|c|c|c|c|c|}
  \hline Noise & 0.1\% & 0.2\% & 0.4\% & 0.8\% & 1.6\% & 3.2\%  \\
  \hline  $p = 1$ & 0.08  &  0.29 &   0.86 &   0.33 &   4.91 &   4.46   \\
 \hline  $p = 2$ & 0.07 &    0.28 &   0.75 &    0.39&    2.30 &    4.47\\
  \hline  $p = 3$ &0.06 &   0.22 &   0.42 &   0.89 &   1.72 &   4.07 \\
   \hline
 \end{tabular}
 \caption{Example 1: Relative $L^2$-norm error $e_r(\varepsilon,0)$ (\%) at $t=0$ for $\gamma = 1/2$, $b = 4$ and $p = 1, 2, 3$. Measured data was given at $T = 1$. The regularization parameter $\alpha$ was chosen using the \textit{a priori} rule. For small error levels, the error decreases with respect to $p$, which is consistent with the error estimates in Theorem \ref{Theorem2}.}
\label{tab1}
\end{table}

In Figure \ref{fig:11} we compared the reconstruction results with the exact initial solution $u(x,t)$  at $t = 0.1$ and $t = 0$ for $p = 3$ and $b = 4$ at two noise levels: $2\%$ and $5\%$. The figure shows quite accurate reconstructions of $u(x,t)$ at $t = 0$ for both \text{a priori} and \textit{a posteriori} parameter choice rules. In the latter case, the parameter $\tau$ in (\ref{a-posteriori}) was chosen as $\tau = 1.05$.  The reconstructions at $t =0.1$ are more accurate, as expected because problem (\ref{main-problem}) is well-posed for $t > 0$. Qualitatively, the accuracy of our results is comparable to those presented in Figure 2 of \cite{hdt} .

Next, we considered the case with $\gamma = 3/4$. We also chose $p$ and $b$ as in the previous case. The results for noise levels of $2\%$ and $5\%$ are depicted in Figure \ref{fig:12}. We also obtained reasonably accurate results for both cases. However, the accuracy of the case $\gamma = 1/2$ is higher near $t = 0$ as shown in Figure \ref{fig:13} in which the $L^2$-norm error profile with respect to time is shown for noise level of $5\%$. Figure \ref{fig:13} also shows that the both parameter choice rules produced comparable results for $\gamma = 1/2$ but the \textit{a priori} parameter choice rules gives more accurate than the \textit{a posteriori} parameter choice rule for $\gamma = 3/4$.
\begin{figure}[tph]
 \centering
 \begin{tabular}{c c c}
  \includegraphics[width = 0.45\textwidth]{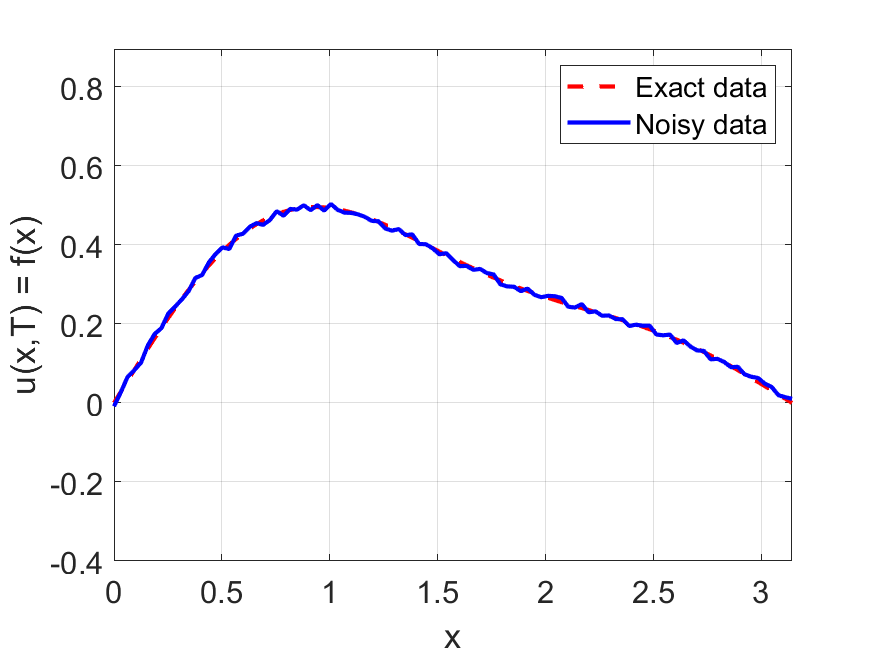} &
  \includegraphics[width = 0.45\textwidth]{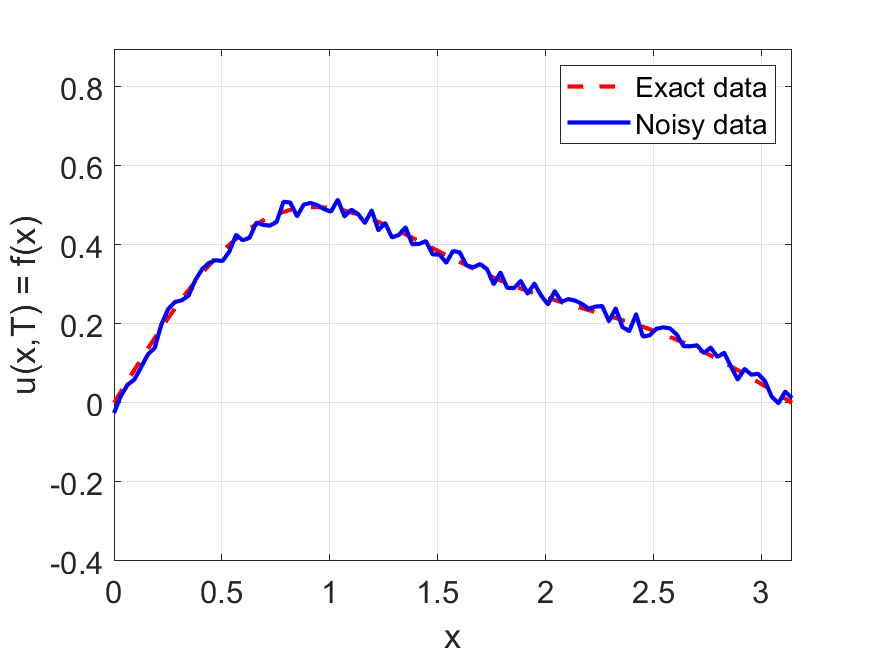} \\
     (a) Data, noise = 2\% & (b) Data, noise = 5\% \\
     & \\
 \includegraphics[width = 0.45\textwidth]{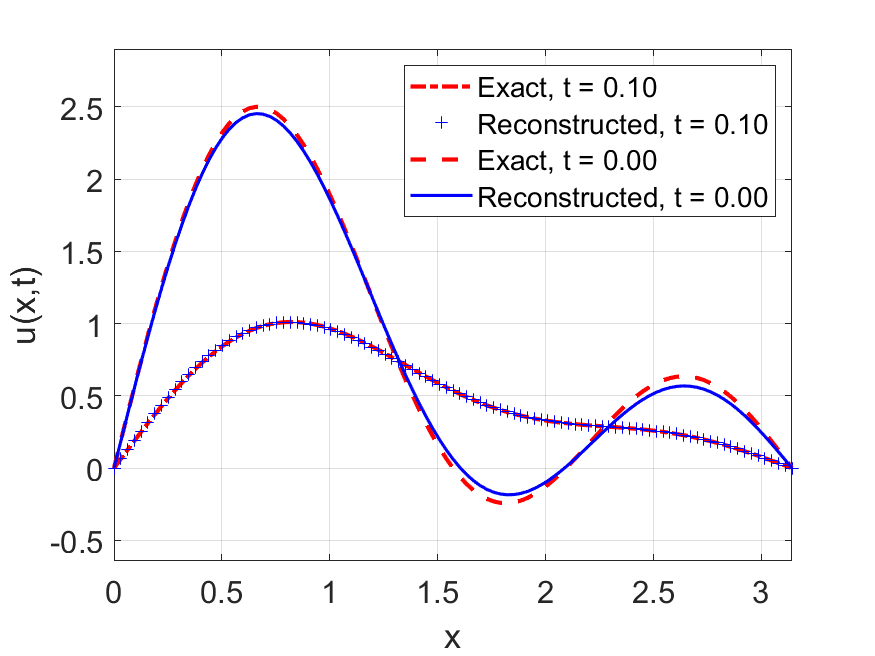} &
  \includegraphics[width = 0.45\textwidth]{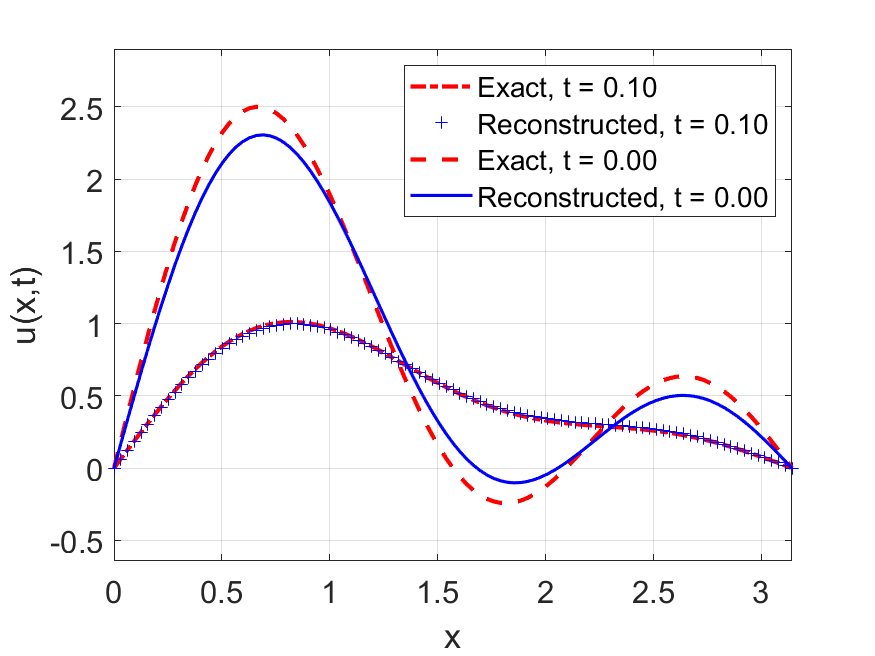} \\
     (c) \textit{a priori} choice, noise = 2\% & (d)   \textit{a priori} choice, noise = 5\%  \\
     & \\
   \includegraphics[width = 0.45\textwidth]{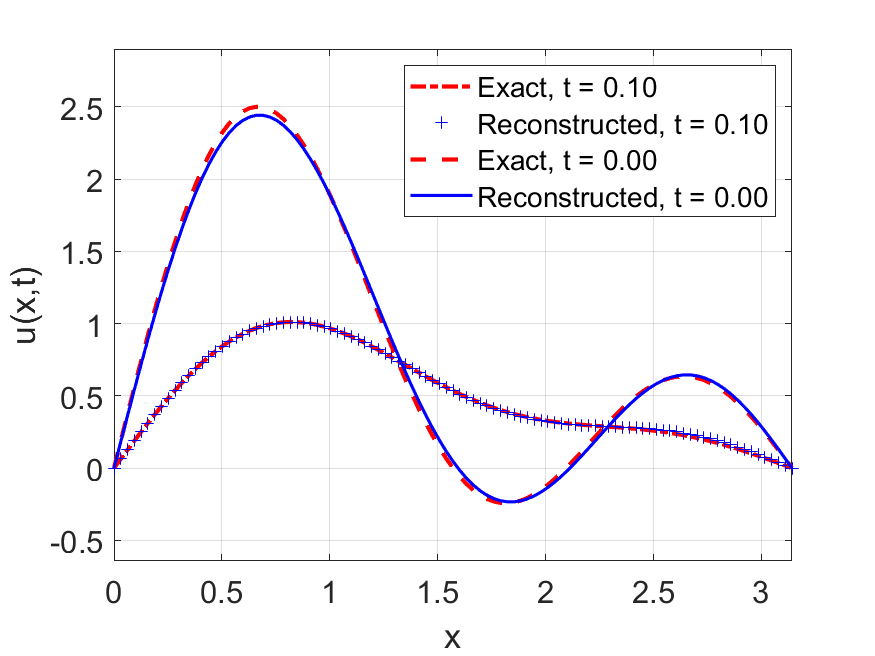} &
  \includegraphics[width = 0.45\textwidth]{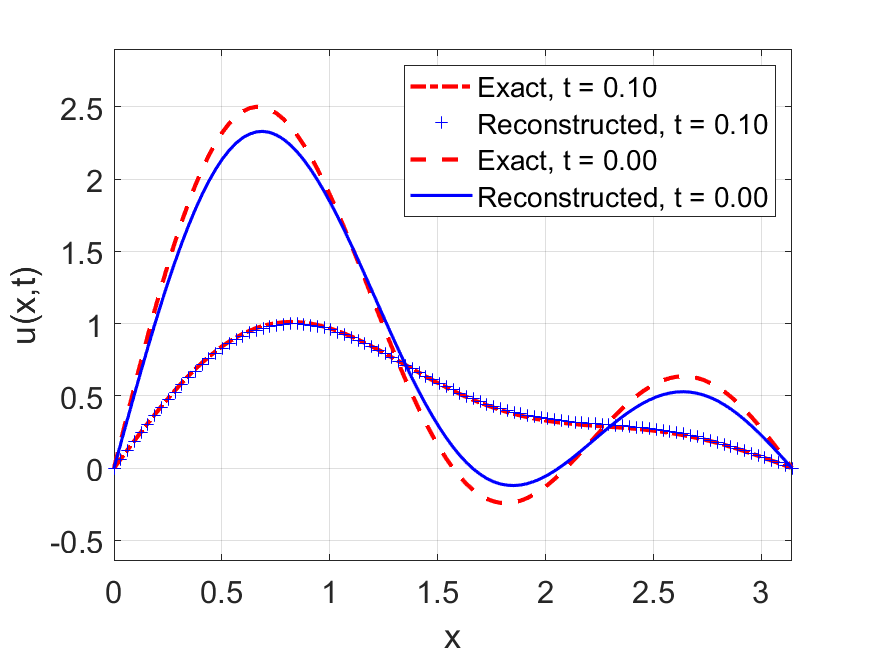} \\
     (e) \textit{a posteriori} choice, noise = 2\% & (f) \textit{a posteriori} choice, noise = 5\% \\

  \end{tabular}
 \caption{Reconstruction result for Example 1 for $\gamma = 1/2$, $p = 3$, $ b= 4$. Left column: noise = 2\%; right column: noise = 5\%.}
 \label{fig:11}
 \end{figure}

 \begin{figure}[tph]
 \centering
 \begin{tabular}{c c c}
  \includegraphics[width = 0.45\textwidth]{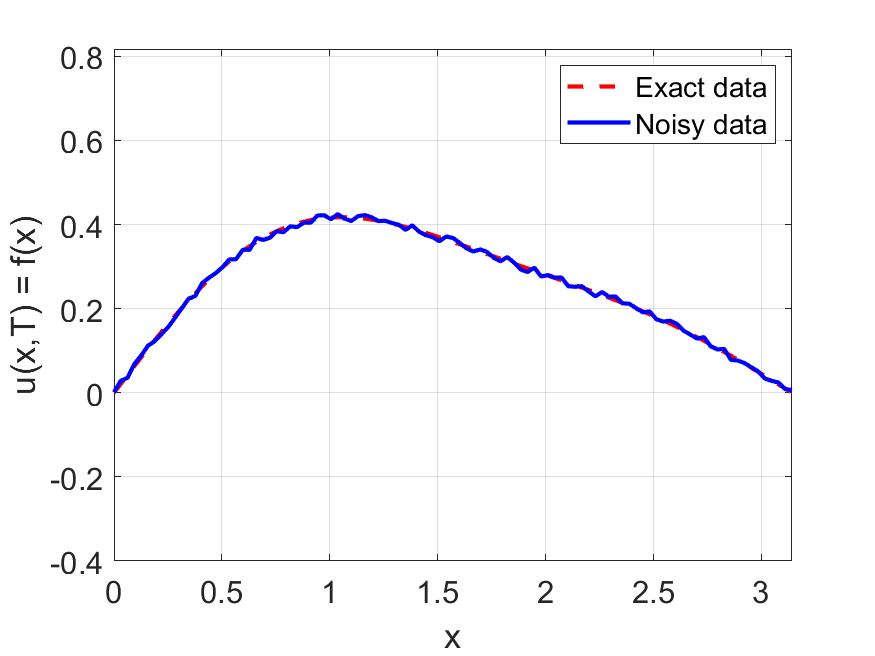} &
  \includegraphics[width = 0.45\textwidth]{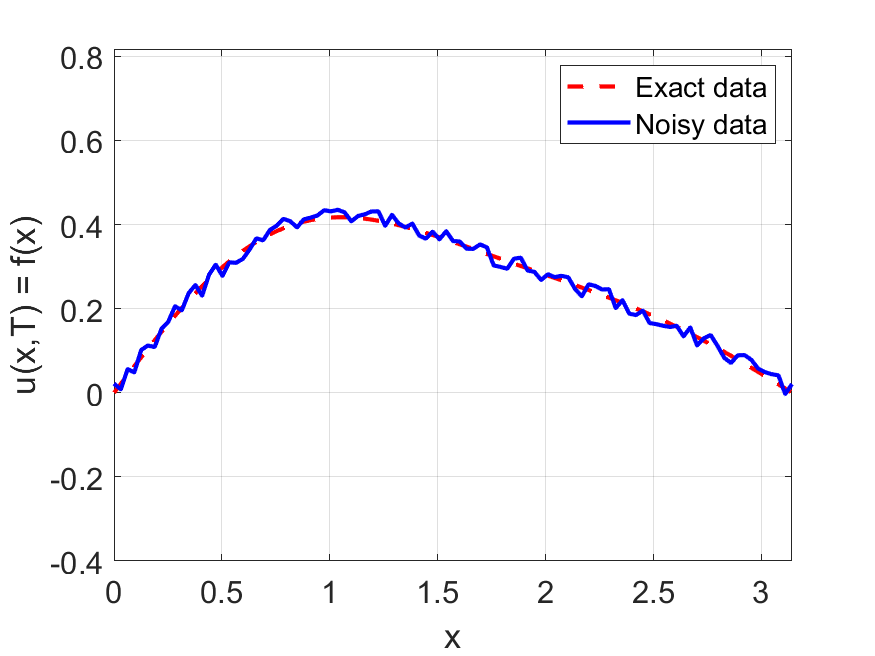} \\
     (a) Data, noise = 2\% & (b) Data, noise = 5\% \\
     & \\
 \includegraphics[width = 0.45\textwidth]{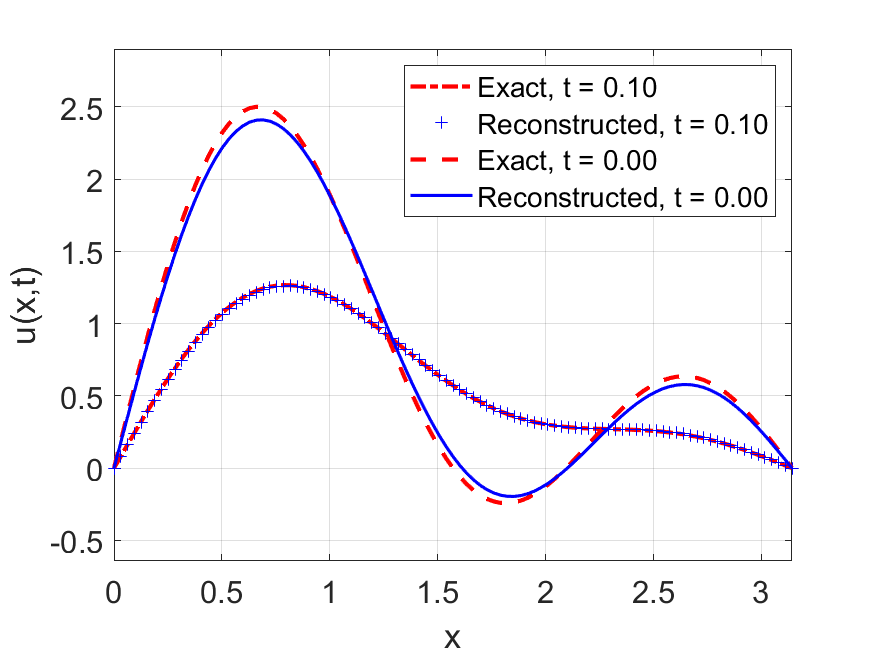} &
  \includegraphics[width = 0.45\textwidth]{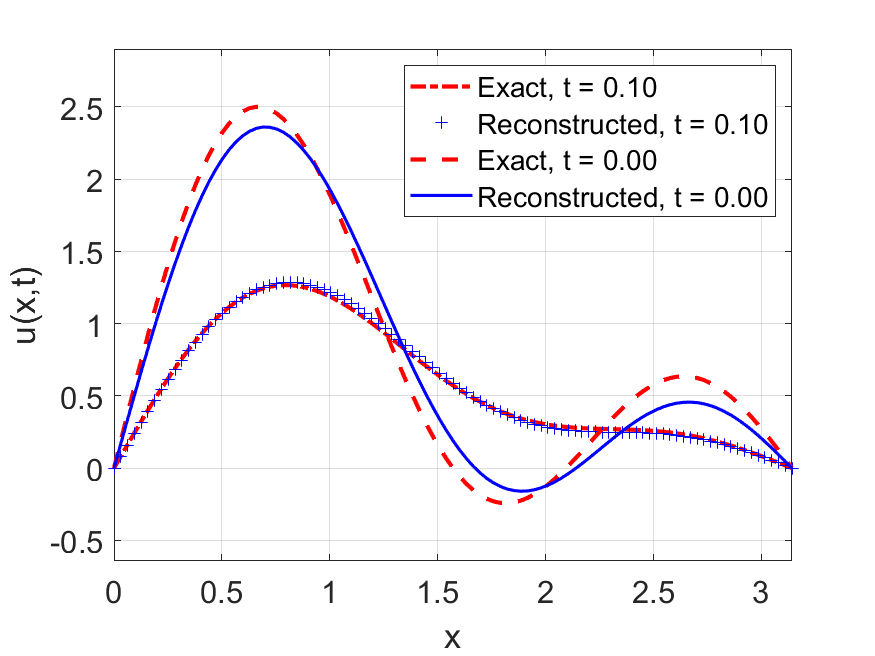} \\
     (c) \textit{a priori} choice, noise = 2\% & (d)   \textit{a priori} choice, noise = 5\%  \\
     & \\
   \includegraphics[width = 0.45\textwidth]{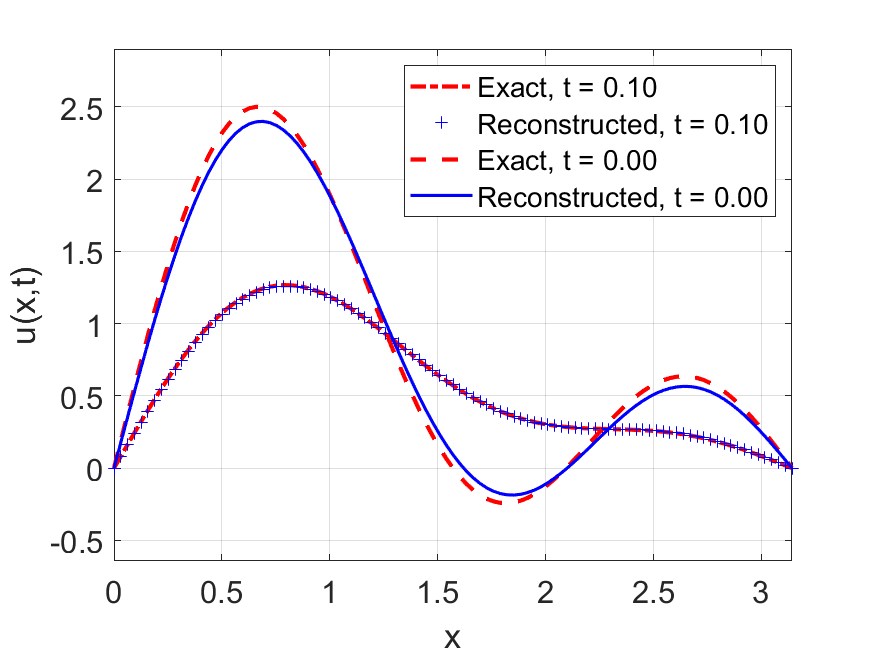} &
  \includegraphics[width = 0.45\textwidth]{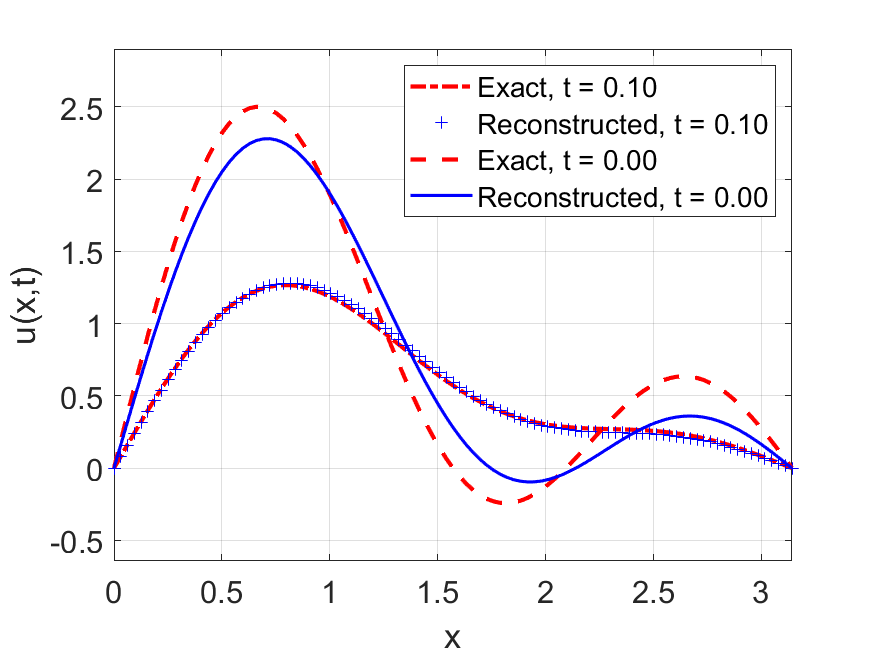} \\
     (e) \textit{a posteriori} choice, noise = 2\% & (f) \textit{a posteriori} choice, noise = 5\% \\

  \end{tabular}
 \caption{Reconstruction result for Example 1 for $\gamma = 3/4$, $p = 3$, $ b= 4$. Left column: noise = 2\%; right column: noise = 5\%.}
 \label{fig:12}
 \end{figure}

  \begin{figure}[tph]
 \centering
 \begin{tabular}{c c c}
  \includegraphics[width = 0.45\textwidth]{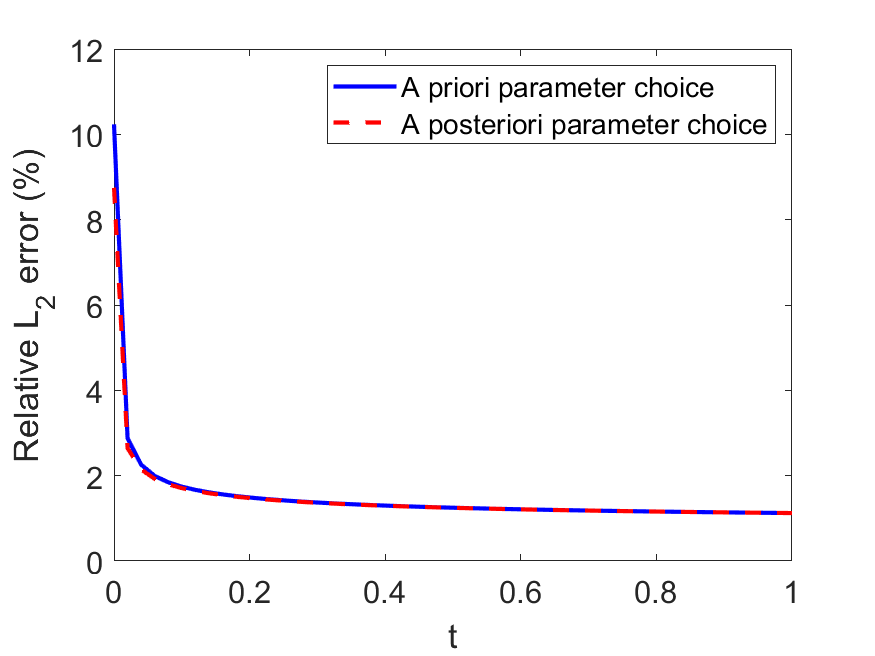} &
  \includegraphics[width = 0.45\textwidth]{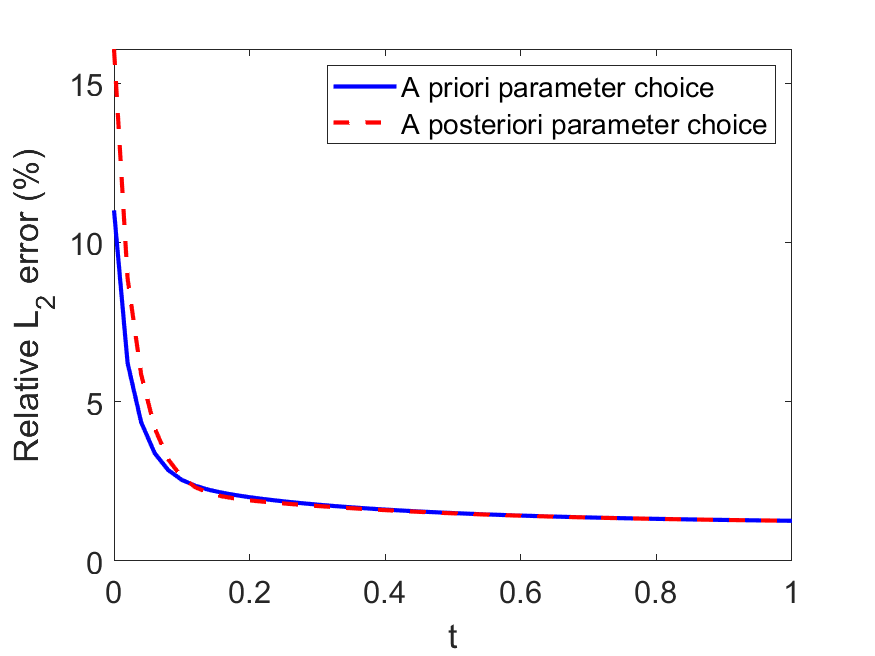} \\
     (a) $\gamma = 1/2$ & (b) $\gamma = 3/4$.
  \end{tabular}
 \caption{Distribution of relative $L^2$-norm error over time for Example 1 with noise level = 5\%. a) $\gamma = 1/2$; b) $\gamma = 3/4$. We can see that the error near $t = 0$ is larger for larger $\gamma$. That means, the larger $\gamma$, the more ill-posed the backward problem.}
 \label{fig:13}
 \end{figure}

  \noindent{\textbf{Example 2}: In this example we test the algorithm for another one dimensional problem which is described by the same equation as in (\ref{eq53}) but the initial condition is given by
  \begin{equation*}
   u(x,0) = u_0(x): = \begin{cases}
                        x, &  0\le x < \pi/2,\\
                        \pi - x, & \pi/2 \le x\le \pi.
                      \end{cases}
  \end{equation*}
It is easy to verify that $$\left< u_0,\phi_n\right> = \sqrt{\frac{2}{\pi}}\int_0^{\pi} u_0(x) \sin(nx) dx = \frac{2\sqrt{2}}{n^2\sqrt{\pi}}\sin(\frac{n\pi}{2}).$$ We approximated the solution of the forward problem (\ref{main-problem000}) by (\ref{eq51}) with $N_p = 30$. In solving the backward equation, we again chose $N_i = 5 $ as in Example 1. We also chose  $p = 3$ and $b = 4$.

The solution values of the backward problem at $t = 0.1$ and $t = 0$ are shown in Figure~\ref{fig:21} for noise levels of $2\%$ and $5\%$. The figure shows that the reconstruction looks very accurate for both parameter choice rules at $t = 0.1$ and reasonably good at $t = 0$. These results are also comparable to the results obtained in Figure 6 of \cite{hdt}. Note that due to the diffusion process, the nonsmooth initial condition is smoothed out rapidly in time, making the reconstruction of the nonsmooth behavior really challenging. We also observe that the \textit{a priori} parameter choice gave more accurate reconstructions of the initial condition $u_0$, in particular, near the point $x = \pi/2$ at which the initial condition is not smooth. One possible reason for the this due to the approximation of operator $B_\alpha$ in calculating the regularization parameter $\alpha$ in the \textit{a posteriori} choice, which may not result in the optimal value of $\alpha$.

 \begin{figure}[tph]
 \centering
 \begin{tabular}{c c c}
  \includegraphics[width = 0.45\textwidth]{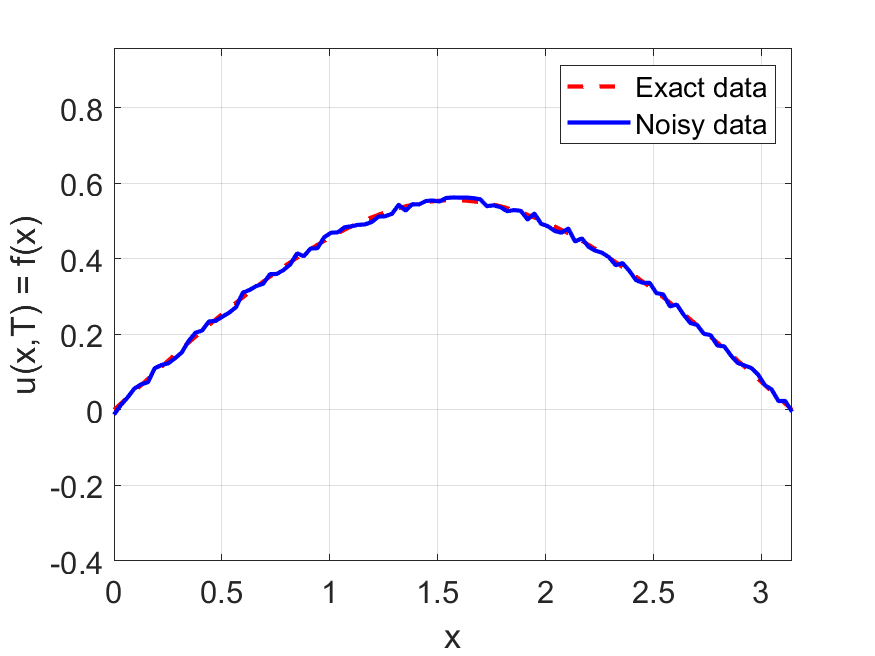} &
  \includegraphics[width = 0.45\textwidth]{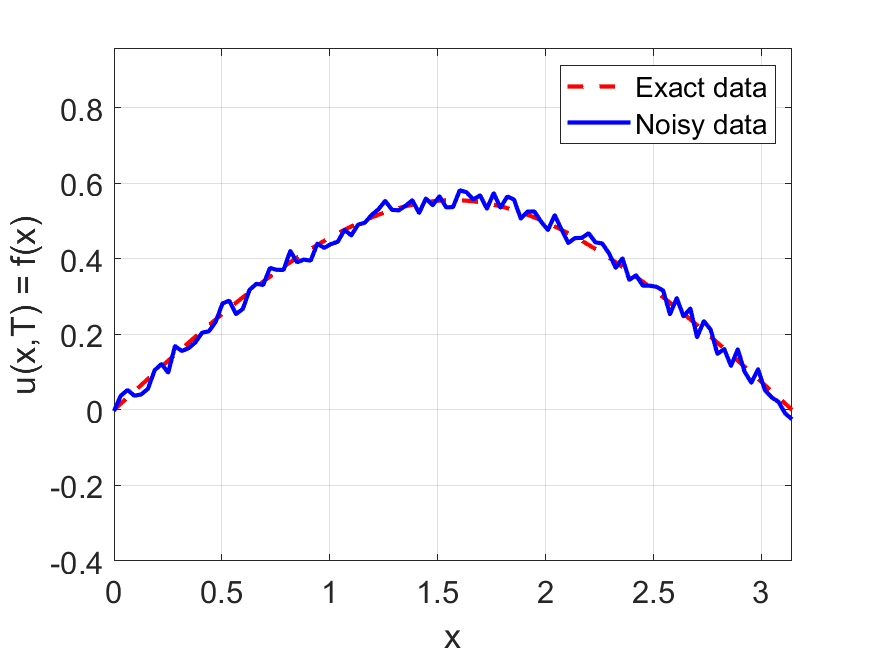} \\
     (a) Data, noise = 2\% & (b) Data, noise = 5\% \\
     & \\
 \includegraphics[width = 0.45\textwidth]{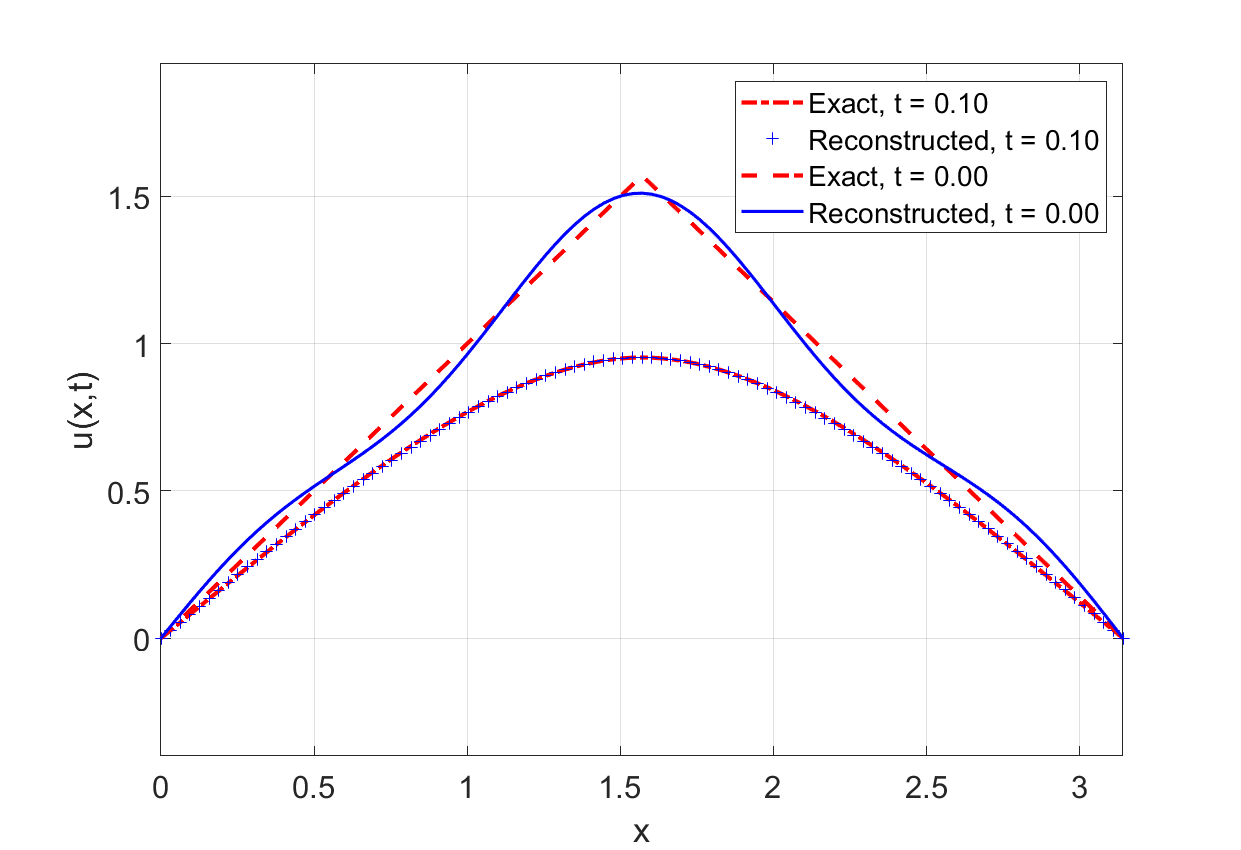} &
  \includegraphics[width = 0.45\textwidth]{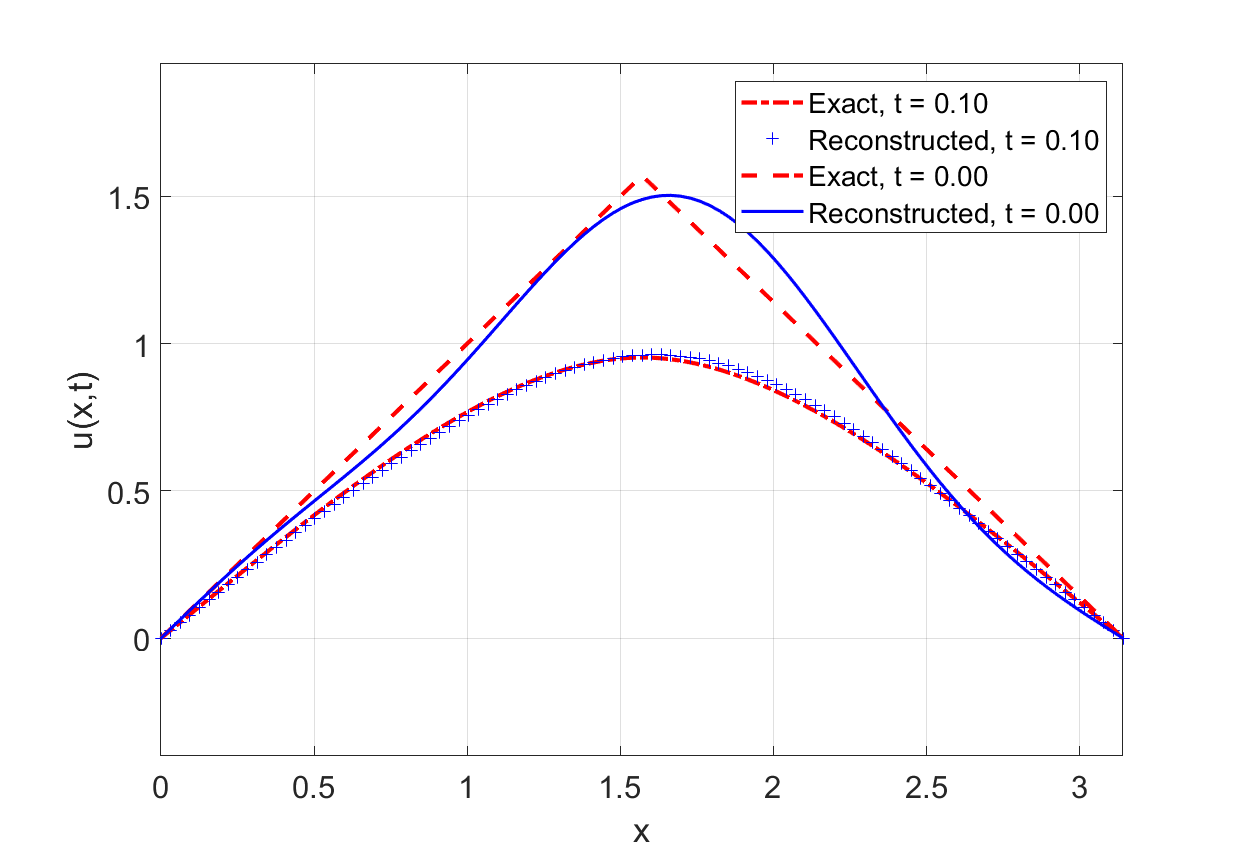} \\
     (c) \textit{a priori} choice, noise = 2\% & (d)   \textit{a priori} choice, noise = 5\%  \\
     & \\
   \includegraphics[width = 0.45\textwidth]{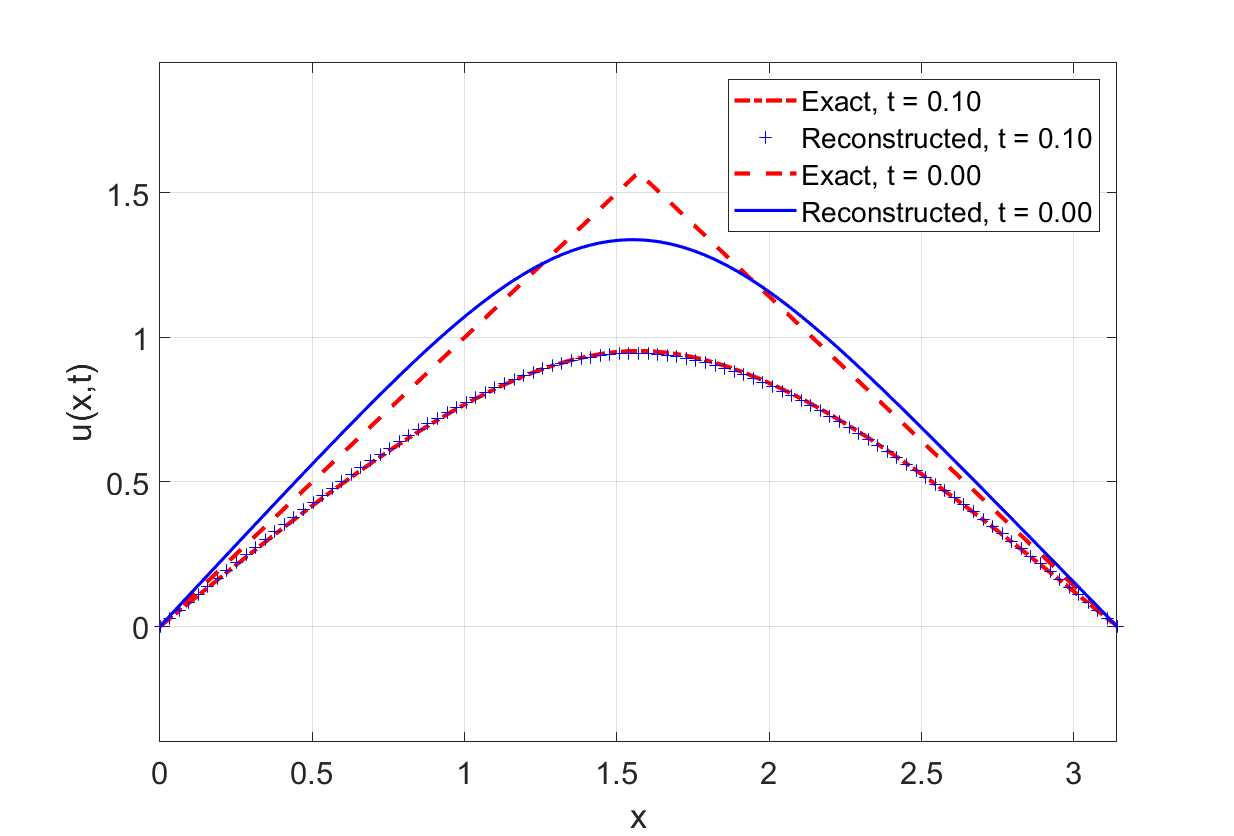} &
  \includegraphics[width = 0.45\textwidth]{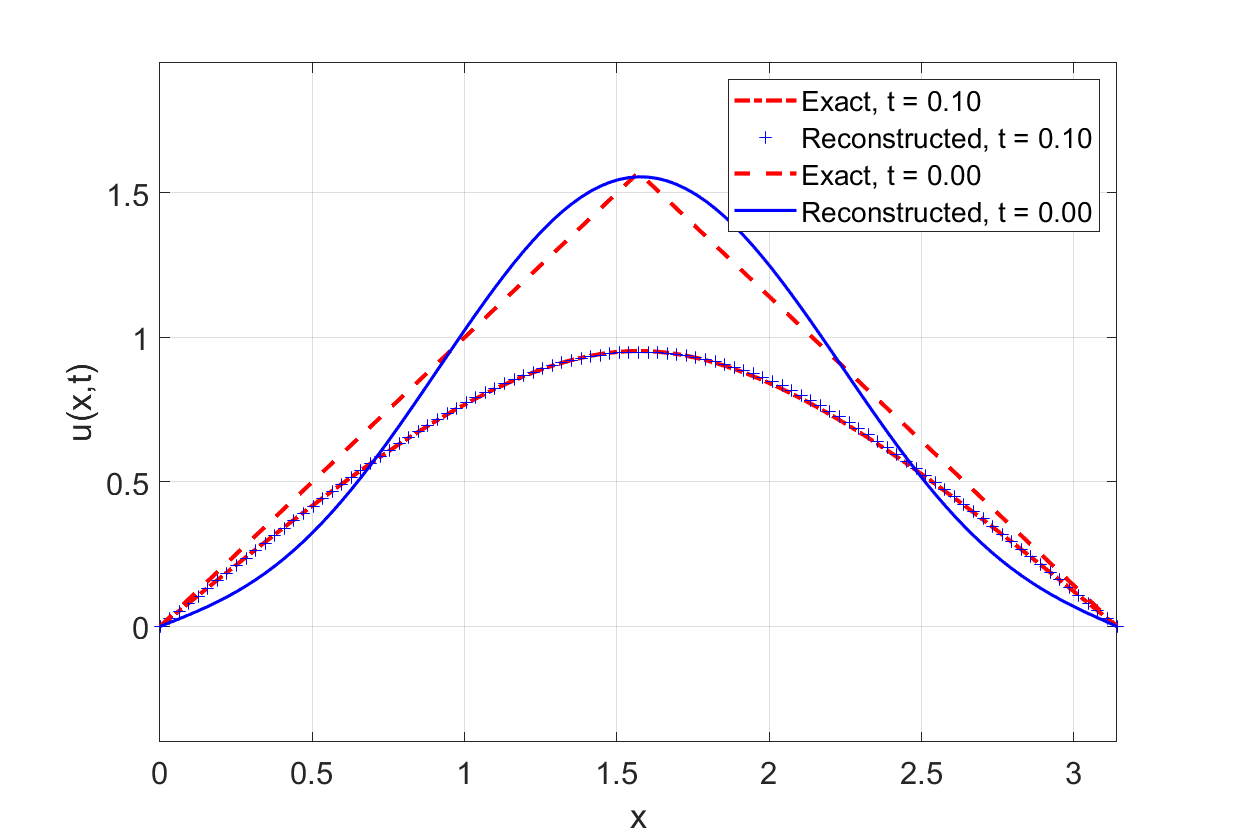} \\
     (e) \textit{a posteriori} choice, noise = 2\% & (f) \textit{a posteriori} choice, noise = 5\% \\

  \end{tabular}
 \caption{Reconstruction result for Example 2 for $\gamma = 1/2$, $p = 3$, $ b= 4$. Left column: noise = 2\%; right column: noise = 5\%.}
 \label{fig:21}
 \end{figure}

 \noindent{\textbf{Example 3}: As the last numerical example, we tested the algorithm against the following two-dimensional problem
 \begin{eqnarray}
  \frac{\partial^\gamma u(x,y,t)}{\partial t^\gamma} &=& \frac{\partial^2 u(x,y,t)}{\partial x^2} + \frac{\partial^2 u(x,y,t)}{\partial y^2},\quad x \in (0,\pi),\ y\in (0,\pi), \ t \in (0,T), \nonumber \\
 u(0,y,t) &=& u(\pi,y,t)=0, \quad y\in (0,\pi),\ t\in (0,T),\label{eq54}\\
 u(x,0,t) &=& u(x,\pi,t)=0, \quad x\in (0,\pi),\ t\in (0,T),\nonumber \\
u(x,y,0) &=& u_0(x,y):= \sin(x)\sin(y)  + \sin(2x) \sin(y),\quad x\in [0,\pi], \ y \in [0,\pi].\nonumber
\end{eqnarray}

 For this problem, the eigenvalues and eigenfunctions are given by
 $$ \lambda_{nm} = n^2 + m^2,\quad \phi_{nm}(x,y) = \frac{2\sin(nx)\sin(my)}{\pi},\quad n, m = 1,2,\dots$$
 The inner products $\left<u_{0}, \phi_{nm}\right>$ are given by
 \begin{equation*}
  \left<u_{0}, \phi_{nm}\right> = \begin{cases}
                                   \pi/2, & n = 1, 2 \text { and } m = 1,\\
                                   0, & \text{ otherwise}.
                                  \end{cases}
 \end{equation*}

The implementation of the algorithm for this problem was similar to the one-dimensional case, except that we had to flatten the matrices of the eigenvalues and eigenfunctions to obtain one-dimensional arrays and then sorted them in the nondecreasing order. Note that in this example, there are repeated eigenvalues, but the corresponding eigenfunctions are not the same.

The reconstructions of the initial condition are shown in Figures \ref{fig:31}-\ref{fig:32} for two noise levels of $2\%$ and $10\%$ with $p = 3$, $b =4$ and $\gamma = 1/2$.  In this example, we observed that $N_i = 10 $ is a good truncation number in solving the backward problem.  The figures indicate that the initial condition was reconstructed accurately taking into account the noise levels in the measured data.

  \begin{figure}[tph]
 \centering
 \begin{tabular}{c c c}
  \includegraphics[width = 0.45\textwidth]{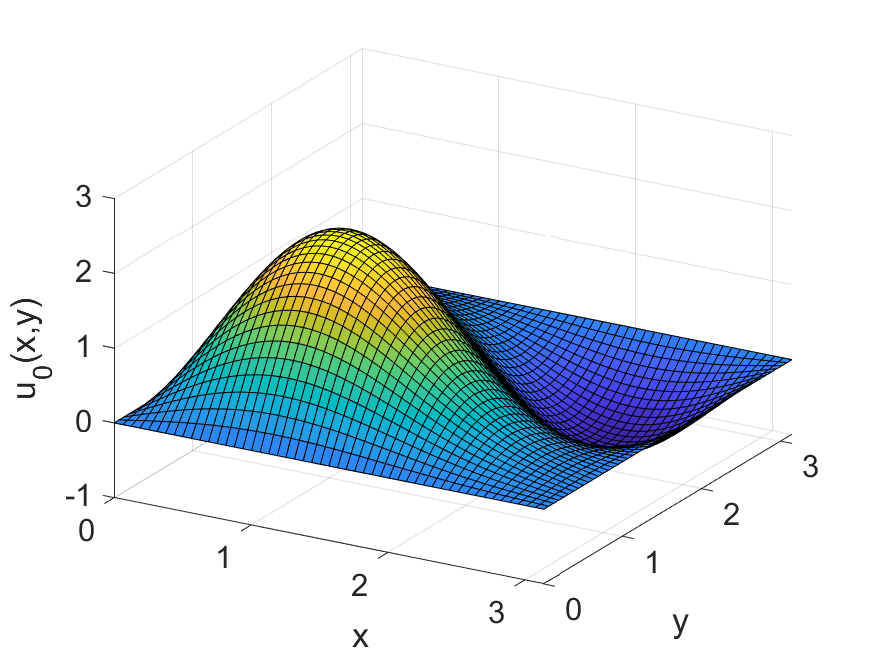} &
  \includegraphics[width = 0.45\textwidth]{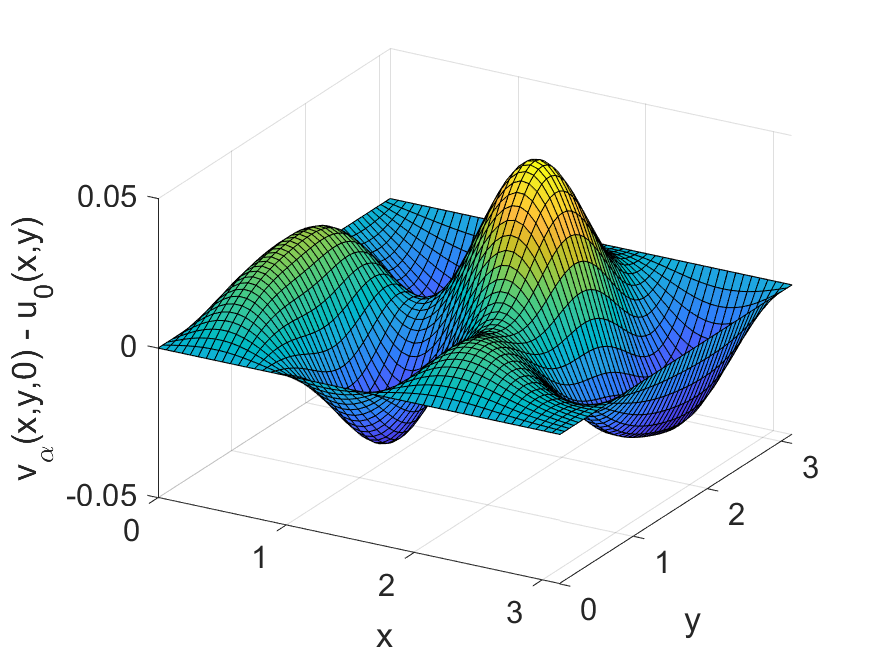} \\
     (a) Exact initial condition & (b) Error, \textit{a priori} rule\\
     & \\
 \includegraphics[width = 0.45\textwidth]{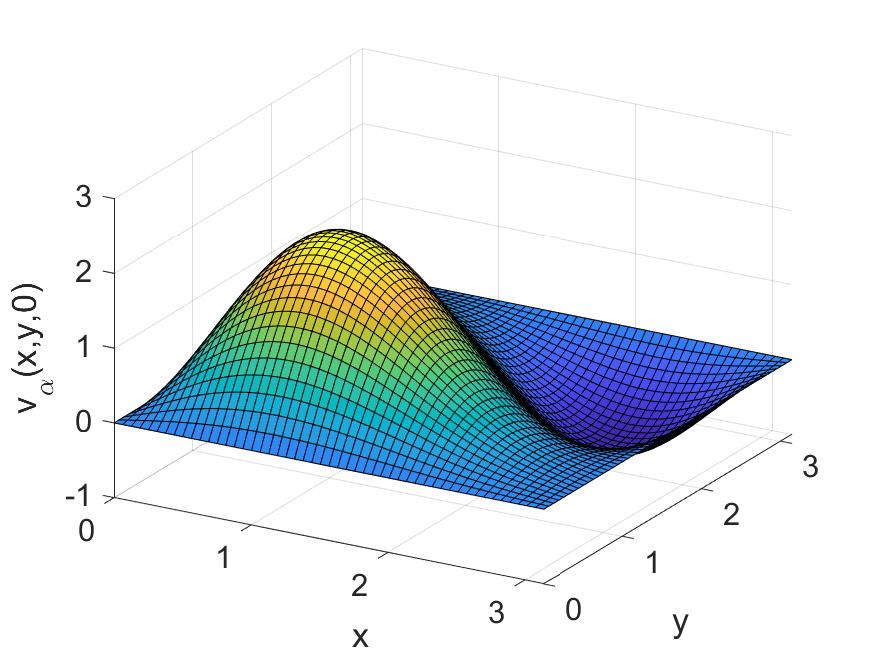} &
  \includegraphics[width = 0.45\textwidth]{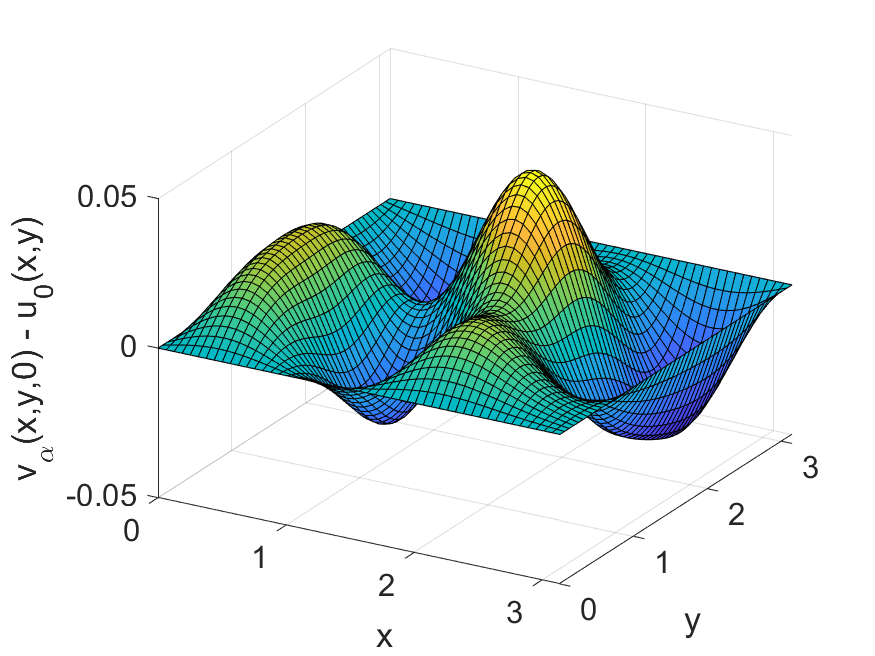} \\
     (c) Reconstruction, \textit{a priori} rule & (d)  Error, \textit{a posteriori} rule \\
         & \\
 \includegraphics[width = 0.45\textwidth]{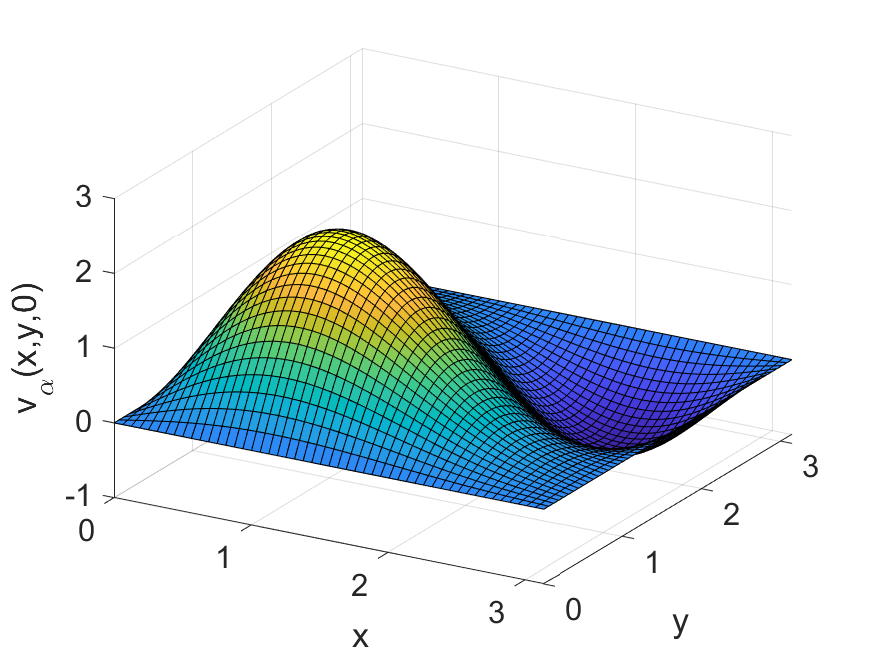} &
  \includegraphics[width = 0.45\textwidth]{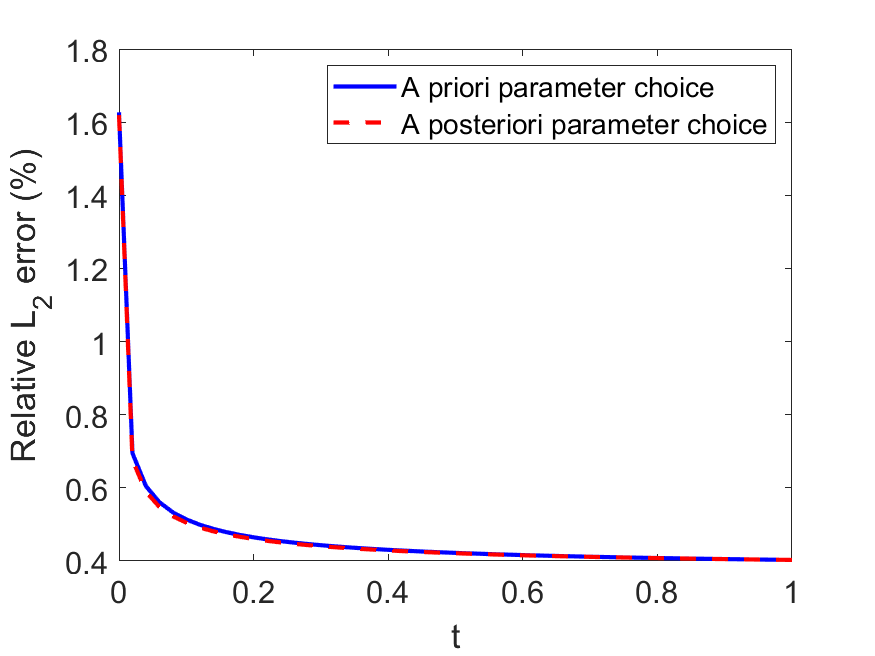} \\
     (e) Reconstruction, \textit{a posteriori} rule & (f)  $L^2$ error distribution in time \\
  \end{tabular}
 \caption{Reconstruction of the initial condition in Example 3  for $\gamma = 1/2$, $p = 3$, $ b= 4$ for noise level = 2\% using the \textit{a priori} parameter choice rule (c) and \textit{a posteriori} parameter choice rule (e). The maximum error is about 2.5\% and the $L^2$-norm error is about 1.6\%. Both parameter choice rules produced almost the same results. }
 \label{fig:31}
 \end{figure}

   \begin{figure}[tph]
 \centering
 \begin{tabular}{c c c}
  \includegraphics[width = 0.45\textwidth]{Exa4_exact_initcond.png} &
  \includegraphics[width = 0.45\textwidth]{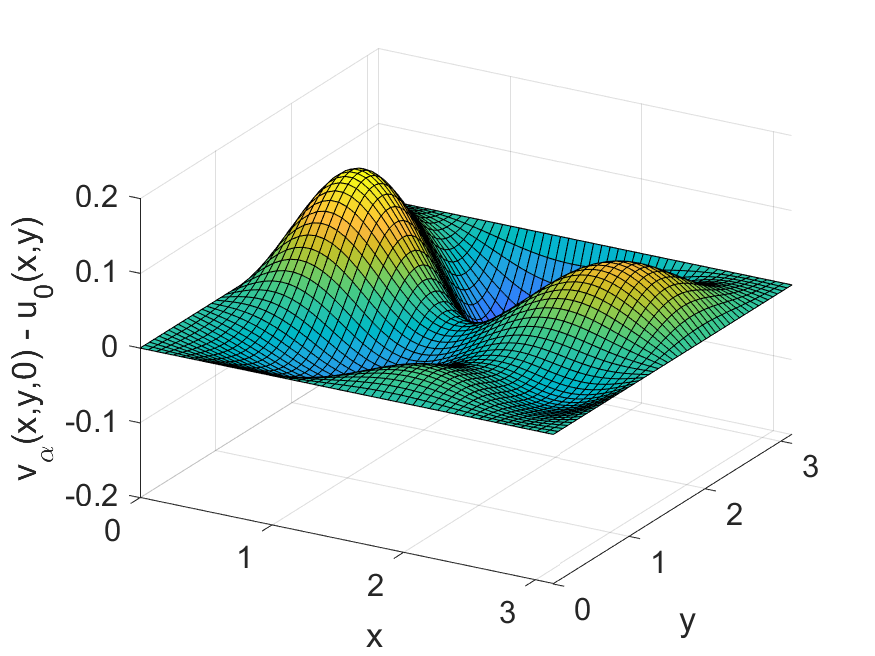} \\
     (a) Exact initial condition & (b) Error, \textit{a priori} rule\\
     & \\
 \includegraphics[width = 0.45\textwidth]{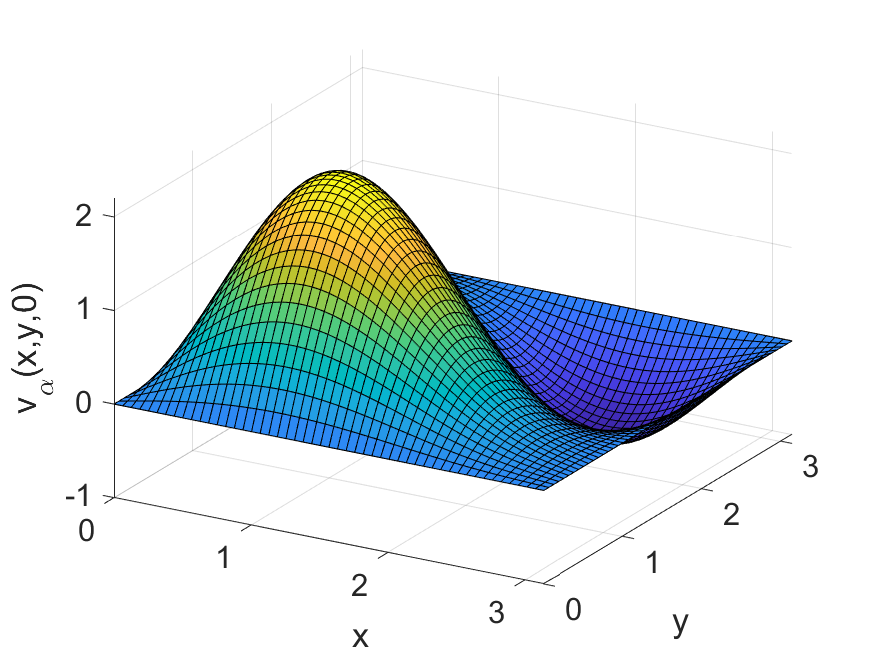} &
  \includegraphics[width = 0.45\textwidth]{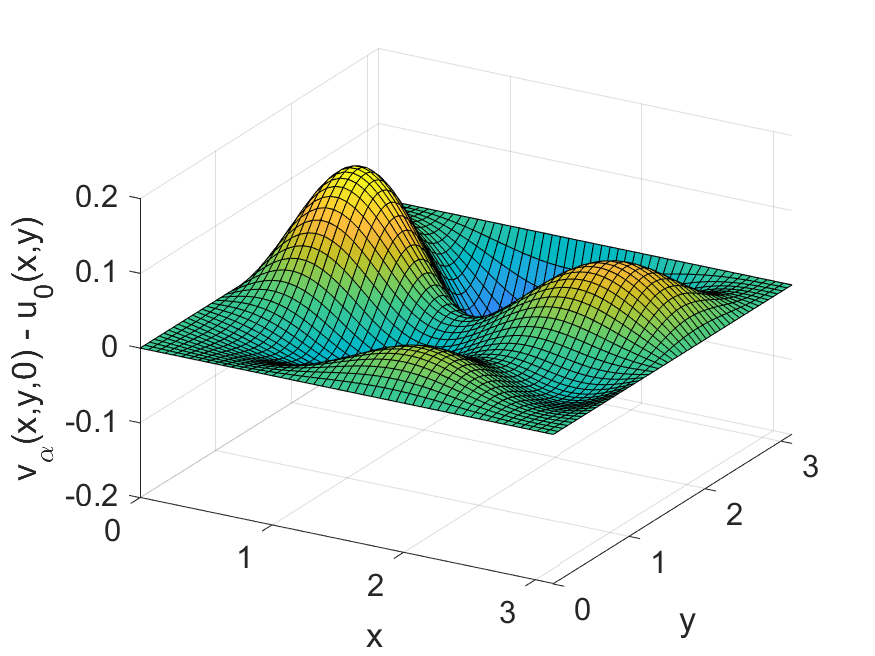} \\
     (c) Reconstruction, \textit{a priori} rule & (d)  Error, \textit{a posteriori} rule \\
         & \\
 \includegraphics[width = 0.45\textwidth]{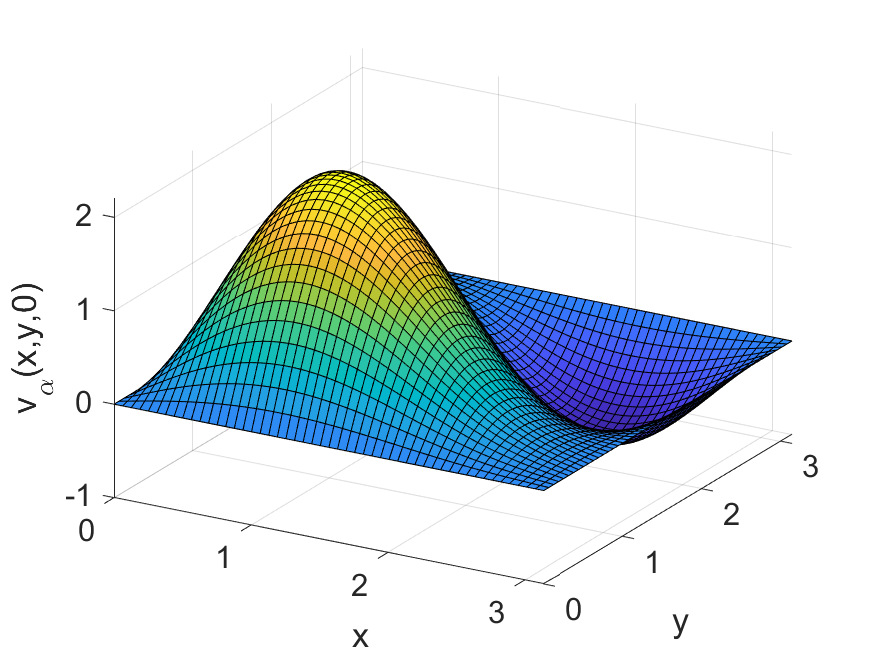} &
  \includegraphics[width = 0.45\textwidth]{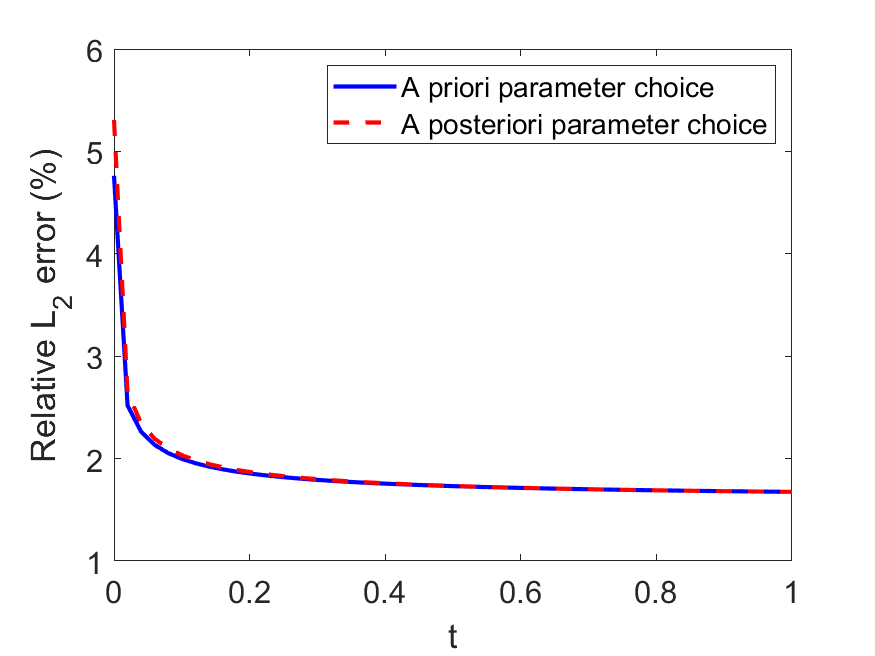} \\
     (e) Reconstruction, \textit{a posteriori} rule & (f)  $L^2$ error distribution in time \\
  \end{tabular}
 \caption{Reconstruction of the initial condition in Example 3  for $\gamma = 1/2$, $p = 3$, $ b= 4$ for noise level = 10\% using the \textit{a priori} parameter choice rule (c) and \textit{a posteriori} parameter choice rule (e). The maximum error is about 10\% and the $L^2$-norm error is about 5\%. Both parameter choice rules produced almost the same results. }
 \label{fig:32}
 \end{figure}

\section{Conclusions}\label{sec:conclusion}

We regularized the backward time-fractional parabolic equations by Sobolev-type equations. We obtained optimal error estimates for the regularized solutions for both \textit{a priori} and \textit{a posteriori} regularization parameter choice rules. The theoretical error estimates were supported by numerical tests for one- and two-dimensional equations.

\bigskip

{\bf Acknowledgments.} The work of N. V. Thang was partly supported by Vietnam
National Foundation for Science and Technology Development
(NAFOSTED) under grant number 101.01-2017.319.


\begin{thebibliography}{99}
\bibitem{mh}M.F.\ Al-Jamal, A backward problem for the time-fractional
diffusion equation. {\it Math.\ Methods Appl.\ Sci}. 40(2017),
2466--2474.

\bibitem{Chadha:2018} A. Chadha,  D. Bahuguna, and  D.N. Pandeym, {Faedo-Galerkin} approximate solutions for nonlocal fractional differential equation of {Sobolev} type. {\it Fract.\ Differ.\ Calc}. 8(2018),
205--222.

\bibitem{ewing}R. E. Ewing, The approximation of certain
parabolic equations backward in time by Sobolev equations, {\it SIAM
J. Math.Anal.}, 6(1975),283--294.


\bibitem{fury}M. A. Fury, Nonautonomous ill-posed evolution problems with strongly elliptic
differential operators, {\it Electron. J. Differential Equations},
92(2013), 1--25.


\bibitem{gaj}H. Gajewski and K. Zacharias, Zur Ruguliarisierung einer
nichtkorrekter Probleme bei Evolutionsgleichungen, {\it J. Math.
Anal. Appl.}, 38(1972), 784--789.


\bibitem{hdt} D.N.\ H\`ao, J. Liu,  N.V.\ Duc, and N.V.\ Thang, Stability results for backward time-fractional parabolic equations,
 {\it Inverse Problems}, 35(2019)
125006 (25pp).


\bibitem{huang}Y. Huang  and Z. Quan, Regularization for a class
of illposed Cauchy problem, {\it Proc. Amer. Math. Soc.}, 133(2005),
3005--3012.

\bibitem{jin}B. Jin and W. Rundell, A tutorial on inverse problems for
anomalous diffusion processes, {\it Inverse Problems}, 31(2015)
035003 (40pp).

\bibitem{long94}N. T. Long  and A. P. N. Dinh, Approximation of
a parabolic non-linear evolution equation backward in time, {\it
Inverse Problems}, 10 (1994),  905--914.

\bibitem{long96}N. T. Long  and A. P. N. Dinh,  Note on a regularization of a
parabolic nonlinear evolution equation backwards in time, {\it
Inverse Problems}, 4(1996), 455--462.


\bibitem{Kilbas} A.A.\ Kilbas, H.M.\ Srivastava, J.J.\ Trujillo, {\it Theory and Applications of Fractional Differential Equations}, Elsevier, 2006.

\bibitem{Liu-Yamamoto-AA} J.J.\ Liu and M.\ Yamamoto, A backward problem for the time-fractional diffusion equation, {\it Appl.\ Anal.} 89(2010), 1769--1788.



\bibitem{padon90}V. Padr\'{o}n, {\it Sobolev regularization of some nonlinear illposed
problems}, PhD thesis. University of Minnensota, Minneapolis, 1990.


\bibitem{padon}V. Padr\'{o}n, Sobolev regularization of a nonlinear
ill-posed parabolic problem as a model for aggregating populations,
{\it Commun. Partial Differential Equations}, 23(1998),457--486.

\bibitem{P} I.\ Podlubny, {\it Fractional Differential Equatins: An Introduction to Fractional Derivatives,
Fractional Differential Equations, to Methods of Their Solution and
Some of Their Applications}, Academic Press, 1999.


\bibitem{renardy}M. Renardy and R. C. Rogers, {\it An Introduction to Partial Differential
Equations}, 2nd Edition, Springer-Verlag, New York Inc. 2004.


\bibitem{Sakamoto-Yamamoto-Jmaa} K.\ Sakamoto and M.\ Yamamoto,
    Initial value/boundary value problems for fractional diffusion-wave
equations and applications to some inverse problems, {\it J. Math. Anal. Appl.} 382(2011), 426--447.

\bibitem{sho}R. E. Showalter, The final value problem for evolution equations,
{\it J. Math. Anal. Appl.} 47 (1974) 563--572.


\bibitem{Liyan Wang-Jijun Liu1} L.\ Wang and J.\ Liu, Data regularization for a backward time-fractional diffusion problem, {\it Comput. Math. Appl.} 64(2012),3613--3626.

\bibitem{Liyan Wang-Jijun Liu} L.\ Wang and J.\ Liu, Total variation regularization for a backward time-fractional diffusion problem, {\it Inverse Problems} 29(2013), 115013, 22pp.

\bibitem{jwang2} J.G.\  Wang, T.\  Wei , Y.B.\ Zhou, Tikhonov regularization method for a backward problem for the
time-fractional diffusion equation,  {\it Appl.\ Math.\ Model.} 37(2013), 8518�-8532.

\bibitem{jwang1} J.G.\ Wang, T.\ Wei, Y.B.\ Zhou, Optimal error bound and
simplified Tikhonov regularization method for a backward problem for
the time-fractional diffusion equation, {\it J. Comput. Appl. Math.}
279(2015), 277--292.

\bibitem{jwang}J.\ G.\ Wang, Y.\ B.\ Zhou, T.\ Wei, A posteriori
regularization parameter choice rule for the quasi-boundary value
method for the backward time-fraction diffusion problem, {\it Appl.\
Math.\ Lett.} 26(2013), 741--747.

\bibitem{twei} T.\ Wei and J.G.\ Wang,  A modified quasi-boundary value method
for the backward time-fractional diffusion problem, {\it ESAIM Math.
Model. Numer. Anal.} 48(2014), 603--621.

\bibitem{Ming Yang- Jijun Liu-ANM}M.\ Yang and J.\ Liu, Solving a final value fractional diffusion problem by boundary condition regularization, {\it Appl. Numer. Math.} 66(2013), 45--58.

\bibitem{yang-liu2015} M.\ Yang and J.\ Liu,  Fourier regularization for a final value time-fractional diffusion problem, {\it Appl.\ Anal.} 94(2015),  1508--1526.

\end{thebibliography}
\end{document}